\def\RR{\mathbb{R}}
\def\M{\mathcal{M}}
\def\B{\mathcal{B}}
\def\U{\mathcal{U}}
\def\F{\mathcal{F}}
\def\S{\mathfrak{S}}
\theoremstyle{plain}
\newtheorem{theorem}{Theorem}[section]
\newtheorem{proposition}[theorem]{Proposition}
\newtheorem{lemma}[theorem]{Lemma}
\theoremstyle{definition}
\newtheorem{remark}[theorem]{Remark}
\newtheorem{example}[theorem]{Example}
\newtheorem{definition}[theorem]{Definition}
\newtheorem{problem}[theorem]{Problem}
\title{On subdivisions of the permutahedron and flags of lattice path matroids}
\author[C. Benedetti]{Carolina Benedetti-Velásquez}
\address[C. Benedetti]{Department
of Mathematics, Universidad de los Andes}
\email{c.benedetti@uniandes.edu.co}
\begin{document}

\begin{abstract}
In this manuscript we study the subdivisions of the permutahedron $\Pi_n$ into two subpolytopes corresponding to flags of positroids, which are in particular flags of lattice path matroids (LPFMs). A subpolytope $P_{[u,v]}$ of $\Pi_n$ is a Bruhat Interval Polytope (BIP) if $P_{[u,v]}$ is the convex hull of all the permutations (viewed as points in $\RR^n$) in the interval $[u,v]$ in the Bruhat order of $\S_n$. We show that the coarsest subdivisions we obtain into LPFMs are the only subdivisions of $\Pi_n$ via hyperplane splits, into subpolytopes corresponding to BIPs. 
More specifically, we describe the hyperplanes whose intersection with $\Pi_n$ give rise to BIPs. Hence, these subdivisions are polytopes coming from points in the complete nonnegative flag variety. 
\end{abstract}

\maketitle

\section{Introduction}\label{sec:intro}

Let $V$ be a $k$-dimensional vector space in $\RR^n$ and let $\{v_1,\dots,v_k\}$ be a basis of it. That is, $V$ is a point in the real Grassmannian $Gr_{k,n}$. Set $A$ to be the matrix whose row $i$ is $v_i$, for each $i$. Then the collection $\B$ of $k$-subsets of the $n$ columns of $A$ that form a basis for its column space give rise to a representable (over $\RR$) matroid $M=([n],\B)$, where $[n]=\{1,\dots,n\}$. In this case we say that $A$ represents the matroid $M$, although notice that such $A$ is not  unique. The matroid $M$ over $[n]$ is said to have rank $k$. 
If $M=([n],\B)$ is any matroid, its \emph{matroid (base) polytope} is the convex hull of the indicator vectors in $\RR^n$ of each $B\in\B$. It is also the moment map image of the closure of the torus orbit of $A$ in $Gr_{k,n}$, see \cite{GELFAND1987301}.

A particular class of representable matroids (over $\RR$) known as \emph{positroids} led to a stratification of the totally nonnegative Grassmannian $Gr^{\geq 0}_{k,n}$. A positroid $P$ of rank $k$ over $[n]$ can be thought of as a matroid for which there exists a full rank $k\times n$ matrix $A$ whose maximal minors are non-negative, such that $A$ represents $P$ \cite{positivity,BLUM2001937}. 
 A key example of positroids is provided by the family of \emph{lattice path matroids} LPMs, introduced by Bonin, de Mier, and Noy in \cite{BdMN2003}.
 For a fixed $n$, the data of a lattice path matroid $M = M[U,L]$ is given by an `upper' path $U$ and a `lower' path $L$, both lattice paths from $(0,0)$ to $(n-k,k)$ for some $k\leq n$. The bases of $M$ are given by all the lattice paths from $(0,0)$ to $(n-k,k)$ that lie in between $U$ and $L$. See Figure \ref{fig:xmpl} for an example.
 In particular, the \emph{uniform matroid} $U_{k,n}$ corresponds to the LPM whose diagram is the rectangle from $(0,0)$ to ${n-k,k}$.
From a geometric perspective, an LPM corresponds to a generic point in a cell arising from the Richardson cell decomposition of the Grassmannian $Gr_{k,n}$.

A prequel of this manuscript is given by Benedetti and Knauer in \cite{BenKna}. There, the authors study \emph{flags} or \emph{quotients} of LPMs. Given two matroids $M,N$ on the ground set $[n]$ one say that $M$ is a quotient of $N$, or that $\F:(M,N)$ is a flag matroid, if every circuit of $N$ is union of circuits of $M$. We denote this as $M\leq_q N$, and we refer to $M$ and $N$ as the \emph{constituents of the flag $\F$}. As expected, there are several equivalent ways to state that $M\leq_q N$. We provide some of these equivalences in Definition \ref{def:matroid_quotient}.

A very natural example of a flag matroid is given by sequences of vector spaces. In particular, consider the \emph{(real) full flag variety $\F l_n$} which consists of all sequences $F:V_{0}\subset V_1\subset\cdots\subset V_{n}$ of $\mathbb R$-vector spaces where $\dim V_i=i$ for all $i$. Now let $M_i$ be the matroid represented by the vector space $V_i$. Then the collection of matroids $(M_0,M_1,\cdots,M_n)$ give rise to a full flag matroid where each $M_i$ is a \emph{quotient} of $M_{j}$ for $1\leq i\leq j\leq n$. 
Hence, points in $\F l_n$ give rise to full flag matroids, although it is not the case that every full flag matroid comes from a point in $\F l_n$.

Now, the \emph{nonnegative (full) flag variety} $\F l_n^{\geq 0}$ consists of flags $F:V_{0}\subset V_1\subset\cdots\subset V_{n}\in\F l_n$ for which there exists a matrix $A_F$ that realizes $F$ with the additional property that for each $i$, the submatrix given by the top $i$ rows of $A_F$ has nonnegative maximal minors. From this it follows that the flag matroid $\F:(M_0,M_1,\dots,M_n)$ arising from $F$ has the property that each constituent $M_i$ is a positroid. If a flag of matroids $(M_0,M_1,\dots,M_n)$ has the property that each $M_i$ is a positroid, we refer to it as a \emph{positroid (full) flag}. When each constituent $M_i$ is an LPFM we say that the flag $F$ is an LPFM. 
We emphasize that it is \emph{not} the case that every positroid flag $M_0\leq_qM_1\leq_q\cdots\leq_q M_n$ corresponds to a point in $\F l_n^{\geq 0}$, see \cite[Example 7]{BenKna}.

Given a flag of matroids $\mathcal M:M_0\leq_qM_1\leq_q\cdots\leq_q M_n$, the Minkowski sum of the matroid polytopes $P_{M_i}$ is the \emph{flag matroid polytope $P_{\mathcal M}$}. Such a polytope  $P_{\mathcal M}$ has vertices given by  certain permutations of the point $(1,2,\dots,n)$. A motivation to write this manuscript was to understand the relation of flag matroid polytopes whose constituents are LPMs and the permutahedron.

The permutahedron $\Pi_n$ is a polytope in $\RR^n$ that can be obtained in various ways: (i) as the convex hull of all the permutations $z=z_1z_2\cdots z_n\in\S_n$ viewed as points $(z_1,z_2,\ldots, z_n)\in\RR^n$; (ii) as the moment map image of the complete nonnegative flag variety $\F l_n^{\geq 0}$ (\cite{KodaWill_flag,BEW24}); (iii) as the Minkowski sum $\Delta_{1,n}+\Delta_{2,n}+\cdots+\Delta_{n,n}$ (\cite{KodaWill_flag}) where $\Delta_{k,n}$ is the $(k,n)$-hypersimplex obtained as the matroid polytope of $U_{k,n}$.

Perspective (i) tells us that the permutahedron is an example of a \emph{Bruhat interval polytope (BIP)} as defined in \cite{KodaWill_flag}. A BIP is a polytope $P_{[u,v]}$ whose vertices are all the vertices in a Bruhat order interval $[u,v]$. Thus, $\Pi_n$ is the BIP indexed by the whole interval $[e,w]$ from the identity to the longest permutation of $\S_n$. Perspective (ii) tells us that $\Pi_n$ is the flag matroid polytope of a flag matroid whose constituents $(M_1,\dots,M_n)$ are positroids and, moreover, for which there is a matrix $A\in \F l_n^{\geq 0}$ whose top $i$-rows realize $M_i$ as a positroid (\cite[Definition 2.6]{BEW24}). Perspective (iii) tells us that $\Pi_n$ is the flag matroid polytope of the flag matroid whose constituents are the uniform matroids of ranks $1,2,\dots,n$ over $[n]$.

Going back to \cite{BenKna}, Benedetti and Knauer considered quotients of LPMs that are themselves LPMs. They provided a combinatorial condition to identify when an LPM is a quotient of another LPM as illustrated in Figure \ref{fig:goodpair} below.
In particular, \cite{BenKna} proves that a full flag of LPMs $L_0\leq_q L_1\leq_q\cdots\leq_q L_n$ comes from a point in $\F l_n^{\geq 0}$. This is done by showing that the corresponding flag matroid polytope is a BIP.

The main contributions can be summarized as follows: in Theorem \ref{thm:coarsest_v1} we provide a collection of hyperplanes parallel to some facets of $\Pi_n$. Each such hyperplane subdivides $\Pi_n$ into two BIPs, each of them being the flag polytope of a LPFM. On the other hand, in Theorem \ref{thm: all_split_hyperplanes} we show that the hyperplanes of Theorem \ref{thm:coarsest_v1} are the only ones subdividing $\Pi_n$ into BIPs.

One of the main reasons to care about subdivisions of $\Pi_n$ into BIPs comes from tropical geometry. In \cite{JLLO,BEW24} the authors show that such subdivisions can be parametrized by points in the nonnegative tropical flag variety $Tr^{>0}\F l_n$. Hence, we are providing a large family of points in $Tr^{>0}\F l_n$ using LPFMs. In fact, the spark that ignited this paper comes from Table \cite[Table 2]{BEW24} where the authors computed the coarsest (nontrivial) subdivisions of $\Pi_n$ into BIPs and this lead us to wonder about the ones coming from LPFMs. 

This manuscript is organized as follows. In Section \ref{sec:prelim} we provide the background needed to dive into the paper. Passing from matroids and their polytopes to flags of matroids and BIPs. In Section \ref{sec:main_results} we state and prove the main results as explained briefly here. Finally, in Section \ref{sec:problems} we state several problems that we are interested on pursuing further.

\section{Acknowledgements}
We thank Chris Eur for kindly providing his github package on computations of $Tr^{>0}\F l_5$. Also we are very grateful with Johannes Rau for kindly explaining the tropical context relevant to us, and for posing interesting questions. Finally we thank Kolja Knauer for many fruitful discussions in this sequel, let alone all the work in the prequel.

\section{Preliminaries}\label{sec:prelim}
Denote by $[n]$ the set $\{1,2,\dots,n\}$ and by $\displaystyle{{[n]\choose k}}$ the subsets of size $k$ of $[n]$, or $k$-subsets of $[n]$.

\subsection{Matroids and positroids}

We first recall some relevant notions from matroid theory. We assume the reader is familiar with basic properties properties of matroids, we refer to \cite{Oxley} for any undefined terms. 

\begin{definition}\label{def:matroid_quotient}
 A \textit{matroid} $M$ on $[n]$ is a nonempty collection $\mathcal{B}(M)$ of subsets of $[n]$ satisfying the exchange axiom: for any $B_1,B_2\in\mathcal{B}(M)$ and $x\in B_1\backslash B_2$, there exists $y\in B_2\backslash B_1$ such that $(B_1\backslash \{x\})\cup \{y\}\in \mathcal{B}(M)$. Elements of $\mathcal{B}$ are called \textit{bases} and we write $M=([n],\B)$.
    \end{definition}  

The \emph{ (real) Grassmannian} $Gr_{k,n}$ is the set of $k$-dimensional vector spaces $V$ in $\RR^n$. Such $V$ can be thought of as a full rank $k\times n$ matrix $A$ whose rows are a basis for $V$. In this case we say that $A$ is a point in $Gr_{k,n}$.
Given such $A$ we denote by $p_A:=(p_I)_I$ its \emph{Plücker vector}, where $I\in\displaystyle{{[n]\choose k}}$ and $p_I$ is the \emph{Plücker coordinate} obtained as the maximal minor of $A$ computed using columns $I=\{i_1,\dots,i_k\}$ of $A$. 

A matroid $M=([n],\B)$ is \emph{representable} (over $\RR$) if there exists a point $A\in Gr_{k,n}$ such that $p_I\neq 0$ if and only if $I\in\B$. In this case we say that $k$ is the \emph{rank} of $M$, and write $r_M=k$.

The \emph{non-negative Grassmannian} $Gr_{k,n}^{\geq 0}$ is the set of elements $V\in Gr_{k,n}$ for which there is a matrix representation $A$ whose Plücker coordinates are all non-negative. In this case we say that $A$ is a point in $Gr_{k,n}^{\geq 0}$.

\begin{definition}\label{def:positroid}
     A \emph{positroid} is a matroid $M=([n],\B)$ for which there is a point $A\in Gr_{k,n}^{\geq 0}$ that represents $M$.
\end{definition}

As an example, the \emph{uniform matroid} $U_{k,n}$ of rank $k$ over $[n]$ is the matroid whose bases are all the $k$-subsets of $[n]$. It turns out that uniform matroids are a particular example of the family of \emph{lattice path matroids} (LPMs) which we define next.

 For $k\leq n$ let $U=\{u_1<\cdots<u_k\}$ and $L=\{l_1<\cdots<l_k \}$ be elements of $\displaystyle{{[n]\choose k}}$. The \emph{Gale order} $\leq_G$ on $\displaystyle{{[n]\choose k}}$ is such that $U\leq_G L$ if $u_i\leq l_i$ for all $i$. 

 \begin{definition}
  For $k\leq n$ let $U,L\in\displaystyle{{[n]\choose k}}$ such that $U\leq_G L$. The \emph{lattice path matroid} $M=M[U,L]$ is the matroid on $[n]$ whose collection of bases $\B$ consists of elements $B\in\displaystyle{{[n]\choose k}}$ such that $U\leq_G B\leq_G L$.
 \end{definition}

Lattice path matroids (LPMs) were defined as such in \cite{Bon-03}. It has been proved in different ways that they are indeed positroids (see \cite[Lemma 23]{AyushAshraf23}). Graphically, a lattice path matroid $M=M[U,L]$ can be visualized by drawing in the first quadrant the lattice paths $U$ and $L$ from $(0,0)$ to $(n-k,k)$ whose north steps are the ones taken at times $u_1,\dots,u_k$ and $l_1,\dots,l_k$, respectively. See Figure \ref{fig:xmpl} for an example of an LPM (hence a positroid) on $[8]$ of rank 4.

\begin{figure}[htp]
    \centering
    \includegraphics[width=.3\textwidth]{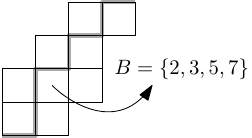}
    \caption{A basis in the diagram of the LPM $M[1246,3568]$.}\label{fig:xmpl}
\end{figure}

\begin{remark}
    An LPM on $[n]$ of rank $k$ can also be thought as a generic point in the corresponding Richardson cell of $Gr_{k,n}$ (see~\cite{KodaWill_gras}).
\end{remark}

Just like points in $Gr_{k,n}$ ($Gr_{k,n}^{\geq 0}$) correspond to representable matroids (positroids), consider now a flag of vector spaces $\F:V_1\subset\cdots\subset V_{s-1}\subset V_s$ in $\RR^n$ of dimensions $d_1<\cdots<d_{s-1}<d_s$. Such a flag $\F$ can be thought of as a full rank $d_s\times n$ matrix $A$ whose first $d_i$ rows are a basis for $V_i$, for all $i$. Hence, such a matrix $A$ \emph{represents} a sequence of matroids $(M_1,\dots,M_{s-1},M_s)$ on $[n]$ where $M_i$ is the matroid represented by the submatrix $A_i$ formed by the first $d_i$-rows of $A$. The flag $\F$ is a \emph{full flag} if $s=n$. The \emph{partial flag variety $\F l_n(d_1,\dots,d_s)$} consists of flags of vector spaces $F:V_1\subset\cdots\subset V_{s-1}\subset V_s$ as above. When represented via a matrix $A$ as we just described we say that $A$ is a point in $\F l_n(d_1,\dots,d_s)$. The full flag variety will be simply denoted $\F l_n$. 
The correspondence between points $A\in \F l_n(d_1,\dots,d_s)$ and sequences  of matroids $(M_1,\dots,M_s)$ that we just discussed is captured and generalized in the following central definition.

\begin{definition}\label{def:quotient}\cite[Prop. 7.4.7]{White}
    Let $M$ and $N$ be matroids on $[n]$. The matroid $M$ is a \emph{quotient} of the matroid $N$, denoted $M\leq_q N$, if any of the following equivalent conditions is satisfied:
    \begin{enumerate}
        \item Every circuit of $N$ is union of circuits of $M$.
        \item Every flat of $M$ is a flat of $N$.
        \item For every $B\in\B(N)$ and every $p\notin B$ there is $B'\in\B(M)$ such that $B'\subseteq B$ and if $B'\setminus\{p\}\cup\{q\}\in\mathcal B(M)$ then $B\setminus\{p\}\cup\{q\}\in\mathcal B(N)$ for all $q\in B'$.
    \end{enumerate}    
    A sequence of different matroids $\M:(M_1,\dots,M_s)$ on $[n]$ such that $M_i\leq_{q}M_{i+1}$ for $i\in[s]$ is a \emph{flag matroid} and the $M_i$ are its \emph{constituents}. If each $M_i$ is a positroid we say that $\M$ is a \emph{positroid flag}. 
    When $s=n$ the flag is a \emph{full flag matroid}.
\end{definition}

\begin{example}\label{ex:flag_mat}
\begin{enumerate}
    \item As suggested in the previous paragraph, let $N$ be the rank $4$ matroid on $[7]$ represented by
    $$A=\begin{pmatrix}
        1&0&1&0&1&1&1\\0&1&1&0&2&2&1\\0&0&0&1&1&2&1\\0&0&0&0&0&0&1
    \end{pmatrix}.$$ Then the matroid $M$ represented by the submatrix $\begin{pmatrix}
        1&0&1&0&1&1&1\\0&1&1&0&2&2&1
    \end{pmatrix}$ is a quotient of $N$, and thus $A$ can be thought of as a point in the partial flag variety $\F l_n(2,4)$. We leave the reader to verify either of the three conditions in Definition~\ref{def:quotient}.
    \item The \emph{(full) uniform flag on $[n]$} is the full flag matroid $\U$ whose $i$-th constituent is the uniform matroid $U_{i,n}$, for every $i\in [n]$. 
\end{enumerate}

\end{example}

\begin{remark}\label{rem:not_rep_flag}
   A flag matroid $(M_1,\dots,M_s)$ on $[n]$ for which there exists a point $A\in\F l_n(d_1,\dots,d_s)$ such that $A_i$ (the first $d_i$ rows of $A$) represent $M_i$ is called a \emph{representable flag matroid}. Example \ref{ex:flag_mat} (1) is one such flag.
    Given a flag matroid $\M:(M_1,\dots,M_s)$ on $[n]$ such that each constituent $M_i$ is representable, it does not necessarily follow that $\M$ can be represented by a point $A\in\F l_n(d_1,\dots,d_s)$, where $d_i=r(M_i)$. In other words, a flag matroid build with representable matroids, does not guarantee that the flag matroid is representable (see~\cite[Section 1.7.5]{Borovik2003} or~\cite[Example 6.9]{am2018flag}).
\end{remark}

In this paper we are concerned with flag matroids whose constituents are positroids. More specifically, we will explore the relation between full flag matroids on $[n]$ whose constituents are LPMs, and subdivisions of the \emph{permutahedron} $\Pi_n$, which we will define shortly.

\begin{definition}\cite[Definition 2.2]{BEW24}\label{def:nonnegflag}
    The \emph{nonnegative (full) flag variety} $\F l_n^{\geq 0}$ consists of the points $A\in\F l_n$ such that the submatrix $A_i$ build by taking the first $i$ rows of $A$, is a point in $Gr_{i,n}^{\geq 0}$, for all $i\in[n]$. 
\end{definition}

 In view of Remark~\ref{rem:not_rep_flag} one may wonder: given a positroid flag $\M:(M_1,\dots,M_s)$ on $[n]$ of ranks $d_1<\cdots<d_s$, is there a matrix $A\in\F l_n^{\geq 0}$ that represents $\M$? That is, such that $A_i\in Gr_{d_i,n}^{\geq 0}$ represents $M_i$? Here, $A_i$ is the submatrix of $A$ given by taking its first $d_i$ rows as before. The answer in general is negative (see~\cite[Example 7]{BenKna}), although is affirmative if the $M_i's$ are LPMs.

In~\cite{BenKna} the characterization of quotients of LPMs was made using (3) of Definition~\ref{def:quotient}. We state this characterization here as we will be using it.

Given an LPM $M=M[U,L]$ on $[n]$ where $U=\{u_1<\cdots<u_k\}$ and $L=\{l_1<\cdots<l_k\}$, we say that a pair $(u_j,l_i)$ is a \emph{good pair of $M$} if and only if $\max\{0,u_j-\ell_i\}\leq j-i$.
Graphically, being a good pair can be visualized as follows. Let $(a,b)$ be the coordinates of the northern vertex of the step $u_j$. If $l_i$ is a step on $L$ within the region that lies in between the halfspaces $x\geq a$ and $y\leq b$, then $(u_j,l_i)$ is a good pair. That is, after fixing $u_j$ we obtain the set of steps in $L$ that form a good pair with $u_j$, and every good pair arises this way. In Figure~\ref{fig:goodpair} we illustrate in the shaded region all the possible good pairs of the form $(4,-)$ of the LPM $M[1247,3568]$.
Now, in Theorem~\cite[Prop. 12]{BenKna} the authors prove that if $M=M[U,L]$ as above then every good pair $(u,l)$ of $M$ gives rise to a matroid $M'=M[U-u,L-l]$ such that $M'\leq_q M$. In this case, the quotient $M'$ is an \emph{elementary} quotient since the ranks of the matroids differ by 1. If $M'\leq_q M$ is not elementary, Theorem 19 of ~\cite{BenKna} proves that there is a flag of LPMs $M´=M_0\leq_qM_1\leq_q\cdots\leq_qM_s=M$ such that the intermediate quotients are elementary. If $\M:M_1\leq_q\cdots\leq_qM_n$ is a full flag of LPMs on $[n]$, we will say that $\M$ is an LPFM, for short.

In Figure~\ref{fig:goodpair} we illustrate an elementary quotient of $M[1247,3568]$ after removal of the good pair $(4,5)$, as well as a quotient of it that is not elementary.

\begin{figure}[htp]
    \centering
    \includegraphics[width=.8\textwidth]{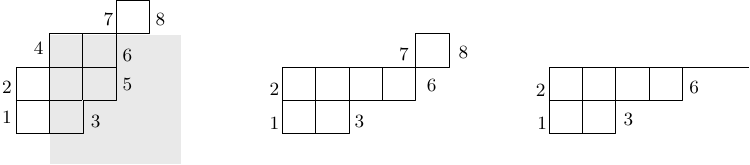}
    \caption{Left: $M=M[1247,3568]$. Center: elementary quotient of $M$. Right: a (not elementary) quotient of $M$.}\label{fig:goodpair}
\end{figure}

\begin{remark}
  Definition~\ref{def:matroid_quotient} is a particular case of \emph{strong maps} between matroids (see~\cite{White}). Our choice of notation $M\leq_q N$ whenever $M$ is a quotient of $N$ is inspired by the fact that matroids on a fixed ground set can be endowed with a partial order structure $\leq$ such that $M\leq N$ if and only if $M\leq_q N$. In~\cite{BCT} the authors pose the question of whether the order $\leq$ restricted to positroids gives rise to a ranked poset (as it it for matroids). In~\cite{BenKna} the authors answer this affirmatively in the case of LPMs. As far as we are concerned this question is still not answered in the case of positroids.
\end{remark}

We now dive into the polytopal analysis of flag matroids, in order to better understand the behavior of flags of LPMs. We will only consider full flags of positroids. More specifically, we will utilize mainly full flags of LPFMs to which we will refer to as Lattice Path Flag Matroids, or LPFMs, for short.

\subsection{The polytope of a flag matroid}
We will assume the reader is familiar with terminology of polytopes and the symmetric group. We encourage the reader to check \cite{ziegler1995lectures} and \cite{PostPerm,PostReinerWill}, for a deeper understanding of this section.

For $n\geq 1$ fixed denote by $\S_n$ the symmetric group on $n$ elements. The simple transpositions of $\S_n$, namely $s_i=(i,i+1)$ for $i=1,\dots,n-1$, generate $\S_n$ as a group. The \emph{length} of a permutation $u\in \S_n$ is the minimum number $k$ of generators needed to obtain an expression for $u$. We denote this as $\ell(u)=k$. The \emph{(strong) Bruhat order} is the partial order on $\S_n$ given as the transitive closure of the covering relations
$$u\lessdot v\Leftrightarrow \ell(v)=\ell(u)+1 \text{ and }(i,j)u=v  \text{ for some }1\leq i< j\leq n.$$

Given $u\in\S_n$ we will write $u$ in one-line notation as $u=a_1\cdots a_n$ if $u(i)=a_i$.

Now let $\Pi_n$ denote the $n$\emph{-permutahedron}. That is, 
$\Pi_n$ is the polytope in $\mathbb R^n$ whose $V$-description is given by
$$ \Pi_n=\text{conv}\{(a_1,\dots,a_n)\,|\,a_1\cdots a_n\in S_n\}.
$$
Alternatively, the $H$-description of $\Pi_n$ may be given as 
\begin{equation}\label{eq:Pi_n}
    \Pi_n=\left\{(x_1,\dots,x_n)\,|\,x_1+\cdots+x_n=1+2+\cdots+n,\,\displaystyle\sum_{S\subset[n]}x_S\geq {{|S|+1}\choose 2} \right\}
\end{equation}
where $x_S:=x_{i_1}+\cdots+x_{i_m}$ if $S=\{i_1,\dots,i_m\}$.

Equation~\ref{eq:Pi_n} describes in fact the facets of $\Pi_n$ as being the faces of $\Pi_n$ obtained by intersecting $\Pi_n$ with hyperplanes of the form $\displaystyle\sum_Sx_S={{|S|+1}\choose 2}$, where $S$ is a non-empty proper subset of $[n]$. Therefore, $\Pi_n$ has $2^n-2$ facets and each facet can be labelled by one such subset of $[n]$.

Let $M=([n],\mathcal B)$ be a matroid. The \emph{matroid (base) polytope of $M$} is the polytope $P_M$ in $\RR^n$ given by
\begin{equation*}
    P_M:=\text{conv}\{e_B:B\in\B\}
    \end{equation*}
    where $e_B$ is the indicator vector of $B$. In fact, $\{e_B:B\in\B\}$ is the collection of vertices of $P_M$. 

When $M=U_{k,n}$ is the uniform matroid of rank $k$ on $[n]$, the matroid polytope $P_M$ receives the name of the $(k,n)$-hypersimplex and its denoted $\Delta_{k,n}$. Notice then that if $N$ is another matroid on $[n]$ of rank $k$ then $P_N\subseteq\Delta_{k,n}$ since every vertex of $P_N$ is a vertex of $P_M$.

\begin{definition}\cite{Borovik2003}\label{def:flag_mat_pol}
    Let $\M\!:\!(M_1,\cdots,M_k)$ be a flag matroid on the ground set $[n]$. The \emph{flag matroid polytope $P_{\M}$ of $\M$} is the polytope given as the Minkowski sum 
    \[
    P_{\M}:=P_{M_1}+\cdots+P_{M_k}.
    \]
\end{definition}

An alternative way to describe the polytope $P_{\M}$ is via its vertices. For each $i=1,\dots,k$ let $\B_i$ be the collection of bases of the matroid $M_i$. Consider the collection of \emph{flags of bases} $\B\M=\{B_1\subset\cdots\subset B_k :B_i\in\B_i, i\in[k] \}$. The vertices of the polytope $P_{\M}$ are given by the set 
\[
\{e_F:F\in\B\M\}
\]
where $e_F=e_{B_1}+\cdots+e_{B_k}$ is the indicator vector of the flag of bases $F:B_1\subset\cdots\subset B_k$. This implies that the polytope $P_{\M}$ lies in the hyperplane $x_1+\cdots+x_n=r_1+\cdots+r_n$ where $r_i=r(M_i)$ is the rank of the matroid $M_i$.

Let us illustrate flag matroid polytopes with the following examples.
\begin{example}\label{ex:flag_polytope}
The following two examples are illustrated in Figure \ref{fig:flag_example_polytope}, on the left side and right side, respectively.

\noindent$(i)$ The permutahedron: consider the (full) flag $\U_3:(U_{1,3},U_{2,3},U_{3,3})$ of uniform matroids on $[3]$. The flag matroid polytope of $\U_3$ is the permutahedron $\Pi_3$. Indeed, the six flags of bases in $\B\U_3$ are
        \begin{align*}
F_1:1\subset 12\subset 123\quad F_2:1\subset 13\subset 123\quad       F_3: 2\subset 12\subset 123\\ F_4:2\subset 23\subset 123\quad
F_5:3\subset 13\subset 123\quad F_6: 3\subset 23\subset 123
        \end{align*}
 whose corresponding vertices are
  \begin{align*}
e_{F_1}=(3,2,1)\quad e_{F_2}=(3,1,2)\quad e_{F_3}=(2,3,1)\\
e_{F_4}=(1,3,2)\quad e_{F_5}=(2,1,3)\quad e_{F_6}=(1,2,3)
        \end{align*}
It is an exercise for the reader to check that $P_{\U_n}=\Pi_n$, for any $n\geq 1$.

\noindent$(ii)$ Let $\M$ be the LPFM whose constituents are $M[2,4]$, $M[12,24]$ and $M[124,234]$ as shown in Figure \ref{fig:flag_example_polytope}.
Two vertices of $P_{\M}$ come from the flags $2\subset 12\subset 124$ of the $U$-bases of each constituent and $4\subset 24\subset 234$ of the $L$-bases of each constituent. These flags of bases give rise to the vertices $v=(2,3,0,1)$ and $u=(0,2,1,3)$, respectively. We can assume that the flag matroid $\M$ has as last constituent the uniform matroid $U_{4,4}$ which has the effect of translating the polytope $P_{\M}$ by the vector  of ones $(1,1,1,1)$ giving us $v=(3,4,1,2)$ and $u=(1,3,2,4)$. In the Figure, color red highlights the vertex $u$ corresponding to the $L$-bases, whereas blue highlights $v$ corresponding to the $U$-bases. We encourage the reader to verify that all the vertices of $P_{\M}$ are the permutations in the interval $[u,v]$. This is not an accident as we will unveil shortly.

\end{example}

Let $F:B_1\subset B_2\subset\cdots\subset B_n=[n]$ be a full flag of subsets of $[n]$, where $B_i\setminus B_{i-1}=\pi_i$. Then $\pi=\pi_1\pi_2\cdots\pi_n$ is a permutation of $[n]$. Let $\tau_F:=\tau_1\tau_2\cdots\tau_n\in S_n$ be such that $\tau:=\pi_c^{-1}$ where $\pi_c(i)=\pi_{n-i+1}$, for each $i\in [n]$. We refer to $\tau$ as the \emph{Bruhat permutation of the flag $F$}. For example, if $F:1\subset 13\subset 135\subset 1345\subset 12345$ then $\pi=13542, \pi_c=24531$ and the Bruhat permutation is $\tau_{F}=51423$.

\begin{figure}[htp]
    \centering
    \includegraphics[width=.8\textwidth]{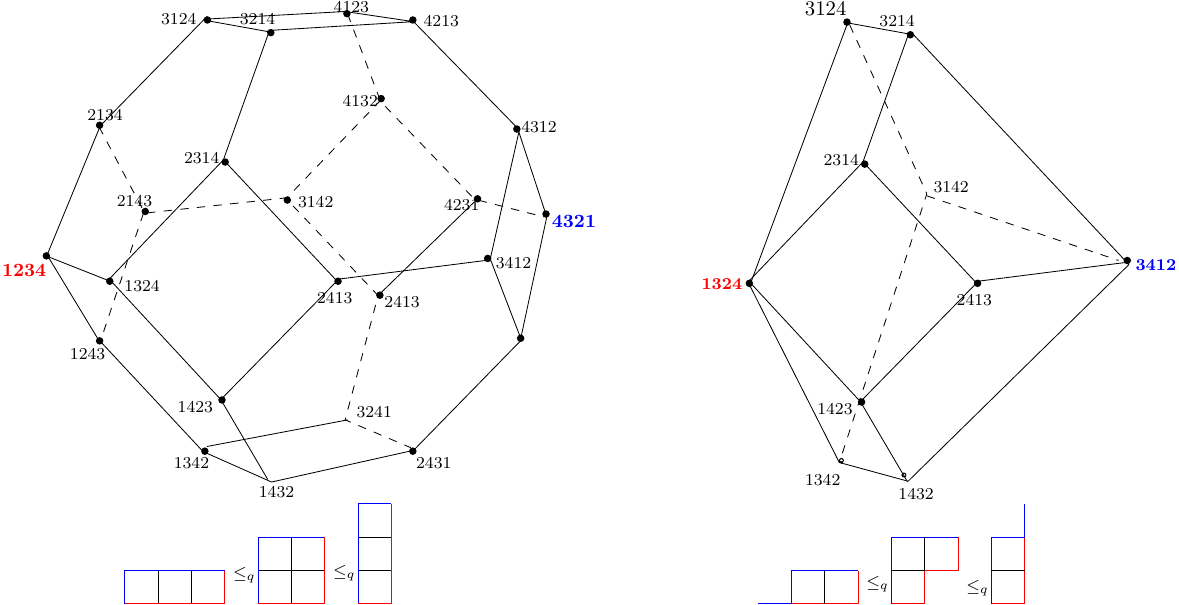}
    \caption{Left: $\Pi_4$ obtained as the uniform flag. Right: an LPFM polytope with two of its vertices highlited.}\label{fig:flag_example_polytope}
\end{figure}

\begin{definition}\label{def:BIP}\cite[Definition A.5]{KodaWill_flag}
Let $u\leq v$ in the Bruhat order on $\S_n$. The \emph{Bruhat interval polytope (BIP) $P_{[u,v]}$} is the polytope given by 
\begin{equation*}
    P_{[u,v]}=\text{conv}\{z=(z_1,\dots,z_n)\in\RR^n: u\leq z\leq v\}.
\end{equation*}
\end{definition}
Given an LPFM $\F:M_1\leq_q M_2\leq_q\cdots\leq_q M_n$ where $M_i=M[U_i,L_i]$ for each $i\in[n]$, it is shown in \cite[Cor. 33]{BenKna} that its flag matroid polytope $P_{\F}$ is in fact a BIP. Namely, $P_{\F}=\text{conv}\{z\in \S_n:\tau_{\mathcal L}\leq z\leq \tau_{\mathcal U} \}$ where $\tau_{\mathcal L}$ is the Bruhat permutation of $L_1\subset\cdots\subset L_n$ and $\tau_{\mathcal U}$ is the Bruhat permutation of $U_1\subset\cdots\subset U_n$. We will refer to the interval $[\tau_{\mathcal L},\tau_{\mathcal U}]$ as the \emph{Bruhat interval of the LPFM $\F$}.

\begin{example}\label{ex:BIP}
    Going back again to the LPFMs in Example~\ref{ex:flag_polytope} we conclude that the polytopes in Figure~\ref{fig:flag_example_polytope} are BIPs. Indeed, the interval $[1324,3412]$ is such that $P_{[1324,3412]}$ is the polytope of the LPFM in $(ii)$. On the other hand, $\Pi_n$ is the BIP corresponding to the interval $[e,\omega]$ where $e=12\cdots n$ and $\omega=n\cdots 21$ are the indentity and the longest permutation in $\S_n$, respectively.
\end{example}

\begin{remark}\label{rem:BIPS_nonnegflag}
    In ~\cite[Proposition 2.9]{TsuWil15} the authors show that every BIP is (the 1-skeleton of) a flag matroid polytope $\M$ that is representable by a point $A\in \F l_n^{\geq 0}$. Conversely, every flag matroid $\M$ that can be represented by some $A\in \F l_n^{\geq 0}$ is such that its flag matroid polytope has all of its vertices in a Bruhat interval $[u,v]$, and thus $P_{\M}$ can be thought of as a BIP. In other words, ~\cite[Proposition 2.9]{TsuWil15} characterizes (full) positroids flags that are representable as points in $\F l_n^{\geq 0}$. As mentioned before, in \cite[Corollary 33]{BenKna} it is shown that (the flag positroid polytope of) every LPFM is a BIP. Hence, LPFMs are representable as points in $\F l_n^{\geq 0}$.
\end{remark}

Let us provide then an instance of a flag of positroids whose flag matroid polytope is not a BIP. Let $\M:M_1\leq M_2\leq_q M_3$ in $[3]$ where $M_1=\{1,3\}$, $M_2=\{12,23\}$ and $M_3=U_{3,3}$ then
$$
P_{\M}=\text{conv}\{(1,2,3),(3,2,1)\}$$
is a line segment which is not an interval in the Bruhat order. Hence the flag of positroids $\M$ is not representable as a point in $\F l_3^{\geq 0}$.

Given a BIP $P_{[u,v]}$ we will abuse terminology and identify it with the interval $[u,v]$ as well as with the flag matroid it comes from. 

The next theorem is a reformulation from \cite{JLLO}, and it also appears in \cite{Borovik2003}. We encourage the reader to consult \cite{PostReinerWill} as a guide to deformations of the permutahedron that give rise to \emph{generalized permutahedrons}. We will not use that terminology here but it might be useful. In particular, every BIP is a generalized permutahedron.

\begin{theorem}\cite[Theorem 3.1]{JLLO}\label{thm:subpermutahedrons} A polytope is the flag matroid polytope of a full flag matroid if and only if its vertices are vertices of $\Pi_n$ and its normal fan is refined by the normal fan of $\Pi_n$.   
\end{theorem}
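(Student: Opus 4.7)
We address the two directions separately.

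\emph{Forward direction.} Suppose $P = P_{\M}$ for a full flag matroid $\M = (M_1,\dots,M_n)$. Necessarily $M_n = U_{n,n}$, so every vertex of $P_{\M}$ has the form $e_F = e_{B_1}+\cdots+e_{B_n}$ for a flag of bases $B_1\subset\cdots\subset B_n = [n]$ with $|B_i|=i$. If $t_j$ denotes the level at which $j$ enters the flag, then the $j$-th coordinate of $e_F$ equals $|\{i:j\in B_i\}| = n-t_j+1$; since $(t_1,\dots,t_n)$ is a permutation of $[n]$, so is the coordinate vector of $e_F$, yielding a vertex of $\Pi_n$. For the normal-fan condition, each matroid polytope $P_{M_i}$ is a generalized permutahedron (equivalently, its edges are parallel to roots $e_a-e_b$), and this property is preserved under Minkowski sums, so the normal fan of $P_{\M}$ is a coarsening of the braid fan, i.e.\ is refined by the normal fan of $\Pi_n$.

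\emph{Backward direction.} Suppose the vertices of $P$ are vertices of $\Pi_n$ and the normal fan of $\Pi_n$ refines that of $P$. For each vertex $\sigma$ of $P$, viewed as a permutation, set $B_i(\sigma):=\{j\in[n]:\sigma_j\geq n-i+1\}$, so that $B_1(\sigma)\subset\cdots\subset B_n(\sigma)=[n]$ and $\sigma = \sum_i e_{B_i(\sigma)}$. Put $\mathcal{B}_i:=\{B_i(\sigma):\sigma\text{ a vertex of }P\}$; the aim is to show each $\mathcal{B}_i$ is the base collection of a matroid $M_i$, that $M_1\leq_q\cdots\leq_q M_n$, and that $P = P_{(M_1,\dots,M_n)}$.

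To realize $\mathcal{B}_i$ as matroid bases, I would work with the support function $h_P(c):=\max_{x\in P}\langle c,x\rangle$, which by hypothesis is linear on each maximal cone of the braid fan. For generic $c$ with unique maximizer $\sigma_c$, one has
\[
h_P(c) \;=\; \sum_j(\sigma_c)_j\, c_j \;=\; \sum_{i=1}^n\sum_{j\in B_i(\sigma_c)} c_j \;=:\; \sum_{i=1}^n h^{(i)}(c),
\]
and the key assertion is $h^{(i)}(c) = \max_{S\in\mathcal{B}_i}c(S)$, asserting that $B_i(\sigma_c)$ is the $c$-optimizer in $\mathcal{B}_i$. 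Granted this, the set function $f^{(i)}(S):=h^{(i)}(e_S)$ inherits submodularity from that of $h_P$, making $\text{conv}\{e_S:S\in\mathcal{B}_i\}$ a matroid polytope $P_{M_i}$. The quotient chain $M_i\leq_q M_{i+1}$ is then verified via condition (3) of Definition~\ref{def:quotient}: given $B_{i+1}=B_{i+1}(\sigma)$ and $p\notin B_{i+1}$, the set $B':=B_i(\sigma)\subset B_{i+1}$ plays the required role, with exchange compatibility forced by the edge structure of $P$. Finally, $P = P_{\M}$ follows because both polytopes share the support function $\sum_i h^{(i)} = h_P$.

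The principal obstacle is the maximizer claim $h^{(i)}(c)=\max_{S\in\mathcal{B}_i}c(S)$. One must rule out the possibility that some unrelated vertex $\sigma'\neq\sigma_c$ of $P$ produces $S=B_i(\sigma')\in\mathcal{B}_i$ with $c(S)>c(B_i(\sigma_c))$; the difficulty is that the swap of two values $v_a,v_b$ associated with an edge of $P$ can simultaneously alter several levels $B_{i'}$, not just one, so a naive single-step swapping argument does not immediately decouple the levels. I would therefore argue globally, exploiting that $h_P$ is the PL-extension of a submodular set function and invoking the canonical expansion of a generalized permutahedron as a signed Minkowski sum of hypersimplices to read off the summands $h^{(i)}$ in a coordinated way across all $i$.
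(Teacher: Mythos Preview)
The paper does not give its own proof of this statement: it is quoted from \cite[Theorem~3.1]{JLLO} (and attributed also to \cite{Borovik2003}) and used as a black box. There is therefore no in-paper argument to compare against.

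On its own merits, your forward direction is correct and complete. In the backward direction your outline is sound and you correctly isolate the crux --- the maximizer claim that $B_i(\sigma_c)$ is the $c$-maximum of $\mathcal{B}_i$ --- but your proposed resolution via the signed Minkowski expansion of a generalized permutahedron is not the right tool: that expansion is indexed by subsets $S\subseteq[n]$ (the simplices $\Delta_S$), not by rank levels $i$, and does not hand you the summands $P_{M_i}$ you need.

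The standard resolution (essentially Gelfand--Serganova / Borovik--Gelfand--White) is a monotone-path argument, and it is short enough that your swapping intuition actually does work once stated correctly. For generic $c$ and any vertex $\tau$ of $P$, there is a path $\tau=\tau_0,\tau_1,\dots,\tau_m=\sigma_c$ along edges of $P$ with $\langle c,\tau_k\rangle$ strictly increasing. Each edge $\tau_k\to\tau_{k+1}$ has direction $e_a-e_b$ with $c_a>c_b$; since both endpoints are permutations this swaps the values at positions $a$ and $b$, and one checks that for \emph{every} level $i$ the quantity $c(B_i(\tau_k))$ weakly increases along the step (by $c_a-c_b>0$ if level $i$ is affected, by $0$ otherwise). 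Chaining the inequalities gives $c(B_i(\tau))\le c(B_i(\sigma_c))$ for all $i$ and all $\tau$, which is exactly the maximizer claim. The point you were worried about --- that a single edge alters several levels at once --- is harmless, because all affected levels move in the \emph{same} direction. With this in hand, the rest of your outline (each $\mathcal{B}_i$ is a matroid via the normal-fan/greedy characterization, $P=\sum_i P_{M_i}$ via support functions, and the quotient relations via the maximality property of the coherent flags $B_1(\sigma_c)\subset\cdots\subset B_n(\sigma_c)$) goes through.
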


With this theorem in mind we will now move on to the section presenting our main contributions. In particular, we will consider subdivions of $\Pi_n$ into two pieces such that each piece satisfies Theorem~\ref{thm:subpermutahedrons}. This will be our fist step into understanding subdivisions of $\Pi_n$ into BIPs. 

\section{Main Results}\label{sec:main_results}

The identity permutation of $\S_n$ is denoted by $e$ and written in one-line notation as $e=123\dots n$. The longest permutation is denoted $\omega$ and written in one-line notation as $\omega=n(n-1)\cdots 21$. The reader must remember that $e$ (and $\omega$) depends on $n$ although it does not appear in their notation. Let $A=\{i_1<\cdots<i_m\}\subseteq [n]$ and define the following sequences:
\begin{enumerate}
    \item[$\bullet$] The \emph{increasing sequence of the set $A$ is} $\overrightarrow A:=i_1i_2\cdots i_m$.
    \item[$\bullet$] The \emph{decreasing sequence of the set $A$ is} $\overleftarrow A:=i_mi_{m-1}\cdots i_1$.
    \item[$\bullet$] The sequence $\omega_A\in\S_n$ corresponds to deleting in the longest permutation $\omega$ the values of the set $A$. 
    \item[$\bullet$] The sequence $e_A$ corresponds to deleting in the identity permutation $e\in\S_n$ the values of the set $A$. 
\end{enumerate}
 For instance, if $n=7$ and $A=\{3,5,6\}$ then $\overrightarrow A=356, \overleftarrow A=653, \omega_A=7421, e_A=1247$. Also, $\overrightarrow Ae_A$ denotes the concatenation of the sequences $\overrightarrow A$ and $e_A$. Similar definitions are made for $\overleftarrow A$ and $\omega_A$. Continuing with our example, $\overrightarrow Ae_A=3561247$ and $\omega_A\overleftarrow A=7421653$.

We will make use of this notation in our main results which characterize how to subdivide in a coarse way the polytope $\Pi_n=P_{\U_n}$ into pieces corresponding to BIPs coming from LPFMs. This will be done using hyperplanes.

If $H$ is a hyperplane in $\RR^n$ with affine equation $\sum_{i=1}^na_ix_i=b$, 
we set $H^{>}:=\{p \in \RR^n:\sum_{i=1}^na_ip_i>b\}$, and $H^{<}:=\{p \in \RR^n:\sum_{i=1}^na_ip_i<b\}$. Similar definitions are made for $H^{\geq}$ and $H^{\leq}$.
 
\begin{definition}\label{def:hyper_split}\cite{JosSchro}
A \emph{split} of a polytope $P$ is a polytopal subdivision $\Sigma$ of $P$ (without new vertices) with exactly two maximal cells $F_1$, $F_2$. The intersection $F_1\cap F_2$ is a codimension 1 cell and its affine span is the \emph{split hyperplane $H$} of the subdivision $\Sigma$. In this case we refer to $\Sigma$ as the $H$-split of $P$ and the cells $F_1$ and $F_2$ correspond to  $P\cap H^{\geq}$ and $P\cap H^{\leq}$.
\end{definition}

In this paper we are interested in splits of $\Pi_n$ such that $F_1$ and $F_2$ are positroid flags, and more specifically LPFMs. We will study these splits of $\Pi_n$ by describing the corresponding split hyperplanes. Using the notation in Definition \ref{def:hyper_split}, we observe that since $F_1\cap F_2$ is a facet of the polytopes $F_1$ and $F_2$ then the normal to this facet must be normal to a hyperplane from Equation \ref{eq:Pi_n} (see \cite{PostReinerWill}).
Therefore we will describe the hyperplanes $H$ that are parallel translations of those in Equation \ref{eq:Pi_n} and give rise to an $H$-split of $\Pi_n$ where $F_1$ and $F_2$ are LPFMs. In Figure \ref{fig:poset_4_subdiv} below we exemplify the $H$ splits of $Perm_4$ as minimal elements of a poset. 

\begin{definition}\label{def:schubert}
    An LPM $M=M[U,L]$ of rank $k$ on $[n]$ is said to be a \emph{Schubert LPM} if $L=\{n,n-1,\dots,n-k+1\}$. Similarly, $M$ is called a \emph{dual Schubert LPM} if $U=\{1,2,\dots,k\}$.
\end{definition}
This terminology comes from the fact that \emph{Schubert matroids} are generic points in the cells of the Schubert decomposition of $Gr_{k,n}$ (see~\cite{KodaWill_gras}).

The following lemma tells us how to characterize flags of LPMs whose constituents are all Schubert (or dual Schubert).

\begin{lemma}\label{lemma:schub_is_quo}
   Let $M=M[U,L]$ and $M'=M[U',L']$ be two Schubert matroids on $[n]$, where $L'=L\setminus\{\ell_1\}$. Then $M'\leq_q M$. Dually, let $M=M[U,L]$ and $M'=M[U',L']$ be two dual Schubert matroids on $[n]$ where $U'=\{1,\dots,k\}$ and $U=U'\setminus\{k\}$. Then $M\leq_q M'$.
\end{lemma}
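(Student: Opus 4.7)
The plan is to reduce both statements to the good-pair characterization of elementary quotients of LPMs from Proposition 12 of~\cite{BenKna}: if $(u_j,\ell_i)$ is a good pair of an LPM $M=M[U,L]$ (meaning $\max\{0,u_j-\ell_i\}\leq j-i$), then removing it yields an elementary quotient $M[U\setminus\{u_j\},L\setminus\{\ell_i\}]\leq_q M$. In each direction of the lemma the $U$- and $L$-paths of the two matroids differ in exactly one step each (with the $L$-step being $\ell_1$ in the Schubert case and some $\ell'_i$ in the dual Schubert case), so it suffices to verify that the relevant pair is good.

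For the Schubert case I would use that $L=\{n-k+1,\ldots,n\}$, so $\ell_1=n-k+1$. Letting $u_j\in U$ denote the upper-path step removed to pass from $U$ to $U'$, the good-pair inequality $\max\{0,u_j-(n-k+1)\}\leq j-1$ is trivially satisfied when $u_j\leq n-k+1$; in the remaining range $u_j>n-k+1$ it rearranges to $u_j\leq n-k+j=\ell_j$, which is exactly the Gale inequality $U\leq_G L$ in the $j$-th coordinate. Hence $(u_j,\ell_1)$ is a good pair for every $j$, and Proposition 12 of~\cite{BenKna} delivers $M'\leq_q M$; note that the resulting matroid is Schubert of rank $k-1$ because its lower path is the staircase $\{n-k+2,\ldots,n\}$.

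The dual Schubert statement is handled by the symmetric argument. Now $U'=\{1,\ldots,k\}$, so $u'_k=k$, and the pair to test is $(k,\ell'_i)$, where $\ell'_i\in L'$ is the element removed to obtain $L$ from $L'$. The good-pair inequality $\max\{0,k-\ell'_i\}\leq k-i$ is immediate when $k\leq\ell'_i$; in the remaining range $k>\ell'_i$ it reduces to $\ell'_i\geq i$, which is the Gale inequality $u'_i=i\leq\ell'_i$. Proposition 12 of~\cite{BenKna} then yields $M\leq_q M'$, and $M=M[\{1,\ldots,k-1\},L'\setminus\{\ell'_i\}]$ is again a dual Schubert LPM of rank $k-1$.

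The only nontrivial step is the good-pair verification, and the main (mild) obstacle is just recognizing that once the Schubert (respectively dual Schubert) shape pins the relevant endpoint of the pair to the extremal element $\ell_1$ (resp.\ $u'_k$), the remaining inequality collapses to exactly the Gale inequality built into the LPM structure by $U\leq_G L$ (resp.\ $U'\leq_G L'$). Everything else is an application of the quoted good-pair elementary quotient result.
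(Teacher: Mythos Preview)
Your proof is correct and follows essentially the same approach as the paper: both arguments reduce to verifying that the relevant pair $(u_j,\ell_1)$ is a good pair of $M$ and then invoking the elementary-quotient result of \cite{BenKna}. The only cosmetic differences are that the paper phrases the good-pair check graphically (the step $\ell_1$ lies in the quadrant $x\geq a,\ y\leq b$ determined by the north corner of $u_j$) rather than via the inequality $\max\{0,u_j-\ell_1\}\leq j-1$, and that the paper disposes of the dual Schubert case by matroid duality whereas you rerun the good-pair computation directly with $(u'_k,\ell'_i)$; both variants are equally valid.
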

\begin{proof}
    This follows from the graphical interpretation of being a good pair. Namely, if $U'=U\setminus\{u_j\}$  for any $j\in[k]$, then $\ell_1$ lies in the region $x\geq a, y\leq b$ if $(a,b)$ are the coordinates of the northern corner of the path $u_j$. This is illustrated in Figure~\ref{fig:schubertlpm}, where $\ell_1$ is in bold.
    The second part of the Lemma follows from the first and duality.
    \begin{figure}[htp]
    \centering
    \includegraphics[width=.4\textwidth]{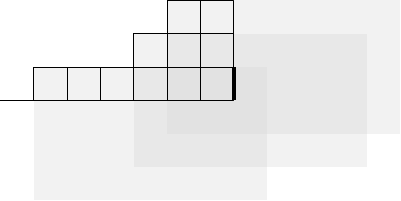}
    \caption{$\ell_1$ is a good pair with either $2, 6, 8$.}\label{fig:schubertlpm}
\end{figure}
\end{proof}

It is known that the 2-dimensional faces of $\Pi_n$ are either hexagonal or quadrilateral. Using \cite[Theorem 4.3]{JLLO}, \cite[Theorem A]{BEW24} one has that a hyperplane $H$ gives a split of $\Pi_n$ into flags of positroids corresponding to BIPs if quadrilaterals do not split, and if a hexagonal face splits then the minimal and maximal permutations of that face are in distinct sides of $H$. 

\begin{definition}\label{def:good_bad_split}
    An $H$-split of $\Pi_n$ is said to be \emph{good} if two dimensional faces are subdivided into BIPs. Otherwise, we say the $H$-split is \emph{bad}.
\end{definition}

We illustrate the two types of bad splits in Figure~\ref{fig:goodbad}. The one via the hyperplane $H:x_1+x_2=5$ splits a square face. The other one is such that, for instance, in the hexagon given by the interval $[1234, 1432]$ the minimal and maximal permutation are on the hyperplane, not in different sides of it. The reader is invited to check that, for instance, the six permutations in the hexagon in Figure~\ref{fig:goodbad} is a flag of positroids with constituents $\{1,2,4\}, \{13,23,34\},\{123,134,234\}$ although this flag is not realizable as a point in $\F l_4^{\geq 0}$ as those six permutations are not an interval.

    \begin{figure}[htp]
    \centering
    \includegraphics[width=.9\textwidth]{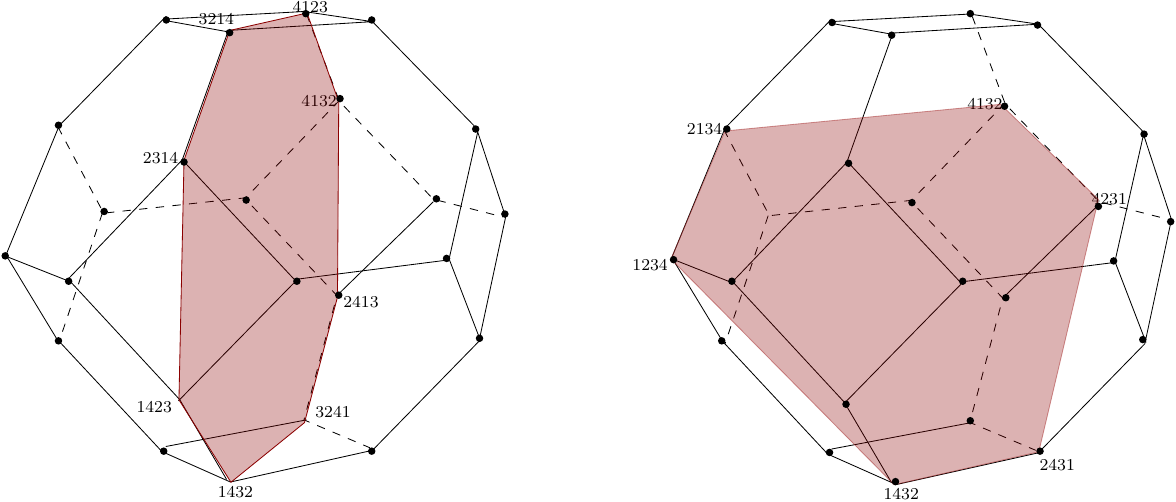}
    \caption{Left: Bad $H$-split with $H:x_1+x_2=5$. Right: Bad $H$-split with $H:x_3=3$}\label{fig:goodbad}
\end{figure}


\begin{proposition}\label{prop:coarse_is_schubert}
    Consider a split of $\Pi_n$ with maximal cells $F_1,F_2$. If $F_1$ and $F_2$ are Bruhat interval polytopes then they are LPFMs. 
\end{proposition}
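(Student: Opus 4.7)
The plan is to realize each half $F_i$ explicitly as the flag polytope of an LPFM that is forced by the extremal vertices of the BIP. First I would observe that in any proper split the identity $e$ and the longest word $\omega$ must lie on opposite sides: otherwise the side containing both would, being a BIP, contain the full interval $[e,\omega]=\S_n$ and thus coincide with $\Pi_n$, contradicting properness of the subdivision. So I may assume $F_1=P_{[u_1,\omega]}$ and $F_2=P_{[e,v_2]}$ for some $u_1, v_2 \in \S_n$.

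Unwinding the Bruhat-permutation convention of Section \ref{sec:prelim}, the flag $B_1\subset\cdots\subset B_n$ with Bruhat permutation $\tau$ satisfies $B_k=\tau^{-1}(\{n-k+1,\dots,n\})$. Applied to the top vertex $\omega$ of $F_1$ this forces the candidate upper path of the $k$-th constituent to be $U_k=[k]$, making each candidate constituent dual Schubert; applied to $u_1$ it produces the candidate lower path $L_k:=u_1^{-1}(\{n-k+1,\dots,n\})$. The mirror construction gives candidate Schubert constituents $M[U_k,\{n-k+1,\dots,n\}]$ for $F_2$, with $U_k:=v_2^{-1}(\{n-k+1,\dots,n\})$.

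The main technical step is checking that $(M[[k],L_k])_{k=1}^n$ is a valid flag of LPMs, and dually for $F_2$. For each $k$, the candidate elementary quotient from $M[[k],L_k]$ down to $M[[k-1],L_{k-1}]$ removes $\{k\}$ from the upper path and $\{u_1^{-1}(n-k+1)\}$ from the lower path; writing this pair as $(u_k,\ell_i)$, the good-pair inequality $\max(0,u_k-\ell_i)\leq k-i$ of \cite[Prop.~12]{BenKna} collapses to $\ell_i\geq i$, which is automatic for any sorted $k$-subset of $[n]$. Combined with Lemma \ref{lemma:schub_is_quo} (whose Schubert half handles $F_2$), this yields $M_{k-1}\leq_q M_k$ for every $k$, so both candidate sequences are LPFMs. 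Finally \cite[Cor.~33]{BenKna} identifies the polytope of each LPFM with the BIP $[\tau_{\mathcal L},\tau_{\mathcal U}]$ built from the Bruhat permutations of its $L$- and $U$-flags; by construction $(\tau_{\mathcal L},\tau_{\mathcal U})=(u_1,\omega)$ for the $F_1$ candidate and $(e,v_2)$ for the $F_2$ candidate, so the two LPFM polytopes coincide with $F_1$ and $F_2$. The only real obstacle is the good-pair verification, and it reduces to the trivial bound $\ell_i\geq i$ once the explicit formula for $L_k$ is written down.
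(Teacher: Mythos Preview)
Your argument is correct and shares the paper's opening move --- separating $e$ and $\omega$ so that the two cells are $P_{[u_1,\omega]}$ and $P_{[e,v_2]}$ --- but the execution thereafter runs in the opposite direction. The paper argues \emph{top-down}: since each $F_i$ is already a flag matroid polytope, its constituent matroids $M_k$ are determined, and the presence of the vertex $e$ (resp.\ $\omega$) forces the Gale-extreme basis of every $M_k$ to be $\{n-k+1,\dots,n\}$ (resp.\ $[k]$), so each constituent is Schubert (resp.\ dual Schubert). You argue \emph{bottom-up}: you manufacture the candidate LPFM $(M[[k],L_k])_k$ directly from the endpoint $u_1$, verify it is a genuine flag via the good-pair inequality (which, as you note, collapses to the automatic bound $\ell_i\ge i$), and then invoke \cite[Cor.~33]{BenKna} to identify its polytope with $P_{[u_1,\omega]}=F_1$.

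What this buys you: your route stays entirely inside machinery the paper has already set up (good pairs and Corollary~33 of \cite{BenKna}) and never needs to know what the constituents of an arbitrary BIP look like. The paper's route is shorter but tacitly uses that having $\{n-k+1,\dots,n\}$ as the Gale-maximal basis forces the constituent positroid of $P_{[e,v]}$ to actually \emph{be} the Schubert LPM, not merely to share its extreme bases --- a structural fact about BIP constituents that is true but not argued there. Your explicit good-pair check and the polytope identification via \cite[Cor.~33]{BenKna} close that gap cleanly.
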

\begin{proof}
   Since $F_1$ and $F_2$ 
   correspond to two intervals whose union is $\Pi_n$, we can assume without loss of generality that $e\in F_1$, and hence $\omega\in F_2$. That is the Bruhat interval of $F_1$ and $F_2$ are of the form $[e,v]$ and $[u,\omega]$, respectively, for some $u,v\in\S_n$.
  It now suffices to notice that the constituents of (the flag matroid of) $F_1$, say $M_1\leq_q M_2\leq_q\cdots\leq_q M_n$ are such that the $L$-basis of $M_i$ is $\{1,2,\cdots,i\}$ since there is a minimal flag of basis whose indicator vector is $e$. On the other hand, there is a unique maximal flag of bases $U_1\subset U_2\subset\cdots\subset U_n$ whose indicator vector is $v$ (see \cite[Section 1.7]{Borovik2003}). Therefore, for each $i$ we have that $M_i$ is a Schubert matroid. Similarly, the constituents of $F_2$ are dual Schubert matroids. The result follows.
\end{proof}

\begin{remark}\label{rem:coarse_isLPM}
    From the proof in Proposition~\ref{prop:coarse_is_schubert}, we concluded that $F_1$, has as 1-skeleton the interval $[e,v]$ for some $v\in S_n$. On the other hand, the 1-skeleton of $F_2$ is the interval $[u,\omega]$ for some $u\in S_n$. We will unveil shortly what are the $u,v$ that give rise to such subdivision and conclude that that $u\leq v$.
\end{remark}

\begin{definition}\label{def:dual_perm}
    Let $\tau\in S_n$. The \emph{permutation dual} of $\tau$ is the permutation $\tau^*\in S_n$ such that $\tau^*(j):=n-\tau(j)+1$, for every $j\in[n]$.
\end{definition}

Let $\F: M_1\leq_q M_2\leq_q\cdots\leq_q M_n$ be an LPFM on $[n]$ and let $[\tau_{\mathcal L},\tau_{\mathcal U}]$ be the corresponding Bruhat interval. Let $\F^*:M_n^*\leq_q M_{n-1}^*\leq_q\cdots\leq_q M_1^*$ be the LPFM obtained from $\F$ by dualizing its constituents. Using Definition \ref{def:dual_perm} it is not difficult to verify that the Bruhat interval polytope of the LPFM $\F^*$ corresponds to the interval $[\sigma,\tau]^*:=[\tau^*, \sigma^*]$. We call $[\sigma,\tau]^*$ the \emph{dual of the interval $[\sigma,\tau]$}.

\begin{example}\label{ex:dual_perm} In Figure~\ref{fig:dual_op} we illustrate that 
    $[e,316542]^*=[461235,\omega]$. We encourage the reader to check that $[132456,\omega]^*=[e,645321]$ and to illustrate the corresponding flags.
      \begin{figure}[htp]
    \centering
    \includegraphics[width=.9\textwidth]{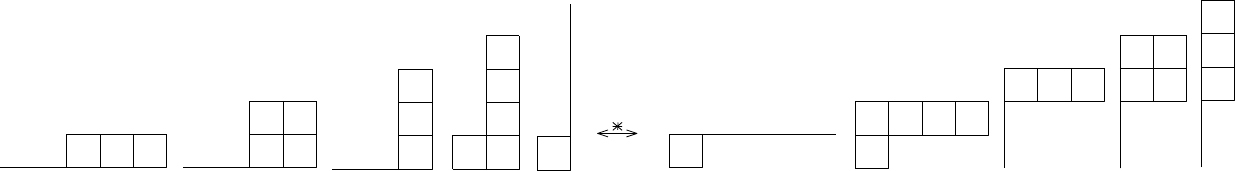}
    \caption{Left: Flag $\F$ corresponding to $[e,316542]$. Right: dual of $\F$ corresponding to $[461235,\omega]$}\label{fig:dual_op}
\end{figure}  
\end{example}

Elaborating on Remark~\ref{rem:coarse_isLPM} it follows that $[e,v]^*=[v^*,\omega]$ and $[u,\omega]^*=[e,u^*]$. Letting $H$ be the affine span of $F_1\cap F_2$, we define the \emph{dual of the split hyperplane $H$}, or simply, \emph{the dual of the $H$-split} as the hyperplane $H^*$-split of $\Pi_n$ where $H^*$ is the affine span of $F_1^*\cap F_2^*$ where $F_1^*,F_2^*$ are the polytopes whose 1-skeleton is $[e,u^*]$ and $[v^*,\omega]$, respectively. In other words, a coarse (non-trivial) subdivision of $\Pi_n$ gives rise to its dual subdivision, by dualizing either the corresponding LPFMs, or dualizing the corresponding Bruhat intervals of the maximal cells.

Now we are ready to state and prove one of our main results.

\begin{theorem}\label{thm:coarsest_v1}
    Let $n\geq 3$. Then each of the following hyperplanes 
    \begin{enumerate}
    \item[(T1)] $H:x_1+x_2+\cdots+x_j=1+2+\cdots+(j-1)+(j+1)={{j+1}\choose 2}+1$ for $j\leq n-2$.
    \item[(T2)] $H:x_1+x_2+\cdots+x_j=n+(n-1)+\cdots+(n-(j-2))+(n-j)$ for $j\leq n-2$.
    \item[(T3)] $H:x_i=r$ where $2\leq r\leq n-1$ and $i\in\{1,n\}$. 
    \end{enumerate}
give rise to a split of $\Pi_n$ into BIPs, and hence into LPFMs. 
\end{theorem}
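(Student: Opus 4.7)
The plan is to verify, for each of the three families, that the proposed $H$-split is \emph{good} in the sense of Definition~\ref{def:good_bad_split}, and then invoke Proposition~\ref{prop:coarse_is_schubert} to upgrade the conclusion from ``split into BIPs'' to ``split into LPFMs''. The characterization recalled in the paragraph preceding Definition~\ref{def:good_bad_split} says exactly: an $H$-split is good iff (i) no quadrilateral $2$-face is strictly subdivided, and (ii) every hexagonal $2$-face that is cut has its Bruhat minimum and maximum on opposite (open) sides of $H$. Every $2$-face of $\Pi_n$ is a hexagon (arising from a size-$3$ block of an ordered set partition of $[n]$, with $3$ consecutive values $\{v,v+1,v+2\}$ at positions $P=\{p_{\min}<p_{\mathrm{mid}}<p_{\max}\}$) or a square (two size-$2$ blocks), so it suffices to analyze these two cases under each hyperplane.

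Case (T3) is the easiest and I would treat it first. Take $i=1$ (the case $i=n$ is symmetric). A hexagon is cut only if $1\in P$, and then forcibly $p_{\min}=1$, so $x_1$ ranges over $\{v,v+1,v+2\}$; the cut is nontrivial precisely when $r=v+1$, and in that case the Bruhat min/max of the hexagon have $x_1=v$ and $x_1=v+2$ respectively, on opposite sides of $H$. For a square, position $1$ lies in at most one of the two size-$2$ blocks, so $x_1$ takes only two values there, and $H$ can meet the square only along an edge or a pair of vertices, never through the interior. The constraint $2\le r\le n-1$ just guarantees that $H$ actually separates $\Pi_n$ into two nonempty cells.

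Cases (T1) and (T2) are dual via the involution $\pi\mapsto \pi^*$ of Definition~\ref{def:dual_perm}, so I would prove (T1) and deduce (T2). Write $c=\binom{j+1}{2}+1$. For a hexagon, set $\ell=|P\cap[j]|$: when $\ell\in\{0,3\}$ the sum $x_1+\cdots+x_j$ is constant (no cut); when $\ell=1$ (forcing $p_{\min}\le j<p_{\mathrm{mid}}$) the values of the sum on the six vertices are $\{v+C,\,v+1+C,\,v+2+C\}$ (each twice); when $\ell=2$ the three values are $\{2v+1+C',\,2v+2+C',\,2v+3+C'\}$. Because these only move by $\pm 1$ along edges, the hyperplane at the integer level $c$ can split the hexagon only when it meets the middle value, and in that situation the min vertex sits below $H$ and the max above, i.e.\ condition (ii) holds.

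The main work, and the step I expect to be the main obstacle, is verifying (i) for (T1): showing that $H$ never enters the relative interior of a square face. The only block configuration in which $x_1+\cdots+x_j$ takes three distinct values on a square is $(|Q_1\cap[j]|,|Q_2\cap[j]|)=(1,1)$; in that case the four sums are $s,s+1,s+1,s+2$ with $s=u_1+u_2+C$, where $u_1=i_1$, $u_2=i_2+1$ are the smaller values of the two blocks (with $i_1<i_2$ their indices in the ordered partition) and $C$ is the sum of the $j-2$ singleton values placed in $[j]\setminus(Q_1\cup Q_2)$. A diagonal cut through the square would require $c=s+1$, i.e.\ $u_1+u_2+C=\binom{j+1}{2}$. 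The plan is to show that in fact
\[
u_1+u_2+C\ \geq\ (j-1)+(j+1)+\binom{j-1}{2}\ =\ \binom{j+1}{2}+1,
\]
obtained by minimizing: the best one can do is to push both size-$2$ blocks as early as possible, forcing $i_1\ge j-1$ and $i_2\ge j$, and to fill the $j-2$ singleton slots in $[j]$ with the smallest available values $1,2,\ldots,j-2$. The identity $(j-1)+(j+1)+\binom{j-1}{2}=\binom{j+1}{2}+1$ then precisely excludes the diagonal scenario: at best $s=c$, i.e.\ the min vertex of the square sits on $H$ while the three other vertices lie strictly above, so $H$ meets the square only at a single vertex and an edge. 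All other block configurations give sums taking $\le 2$ values, and $H$ again touches the square only along the boundary. Combining (i) and (ii) shows the (T1) split is good, and duality then gives (T2).

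Finally, the same duality bookkeeping identifies the two Bruhat intervals explicitly: with $A=\{1,\ldots,j-1,j+1\}$ the top piece of (T1) is $P_{[e,\overleftarrow{A}\omega_A]}$ and the bottom piece is $P_{[\overrightarrow{A}e_A,\omega]}$, which are the LPFMs produced by Proposition~\ref{prop:coarse_is_schubert}; for (T3) one reads off $A=\{r,r+1,\ldots,n\}$ or $A=\{1,\ldots,r\}$ accordingly. Hence each of the hyperplanes in (T1), (T2), (T3) yields a split of $\Pi_n$ into two BIPs, and by Proposition~\ref{prop:coarse_is_schubert} these BIPs are the flag polytopes of LPFMs.
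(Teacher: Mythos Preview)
Your approach is correct and genuinely different from the paper's. The paper proceeds in the \emph{opposite} logical direction: it directly identifies the two halves as explicit Bruhat intervals (for (T1), $[e,\overleftarrow{A}\omega_A]$ and $[\overrightarrow{A}e_A,\omega]$ with $A=\{1,\dots,j-1,j+1\}$; for (T3), $[e,r\omega_r]$ and $[re_r,\omega]$), proves these equalities by constructing Bruhat chains, recognizes each piece as a flag of (dual) Schubert LPMs, and only then concludes they are BIPs via \cite[Cor.~33]{BenKna}. You instead verify the $2$-face criterion quoted before Definition~\ref{def:good_bad_split} to get BIPs first, and then invoke Proposition~\ref{prop:coarse_is_schubert} to get LPFMs. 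Your route is arguably cleaner and makes the role of the cited criterion transparent; the paper's route is more constructive and yields the explicit intervals as part of the proof rather than as an afterthought.

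Two places deserve tightening. First, your square argument for (T1) asserts the minimization ``the best one can do is to push both size-$2$ blocks as early as possible, forcing $i_1\ge j-1$ and $i_2\ge j$'' without justifying that this particular configuration is optimal. A cleaner and complete argument: at the minimum vertex of the square the set $S=\{x_1,\dots,x_j\}$ is a $j$-subset of $[n]$ containing $u_1,u_2$ but \emph{not} $u_1+1$ or $u_2+1$ (those values sit at the positions of $Q_1,Q_2$ outside $[j]$). If $S=[j]$ then $u_1+1\notin[j]$ forces $u_1=j$, and likewise $u_2=j$, contradicting $u_1<u_2$. Hence $S\neq[j]$, so $s=\sum S\ge\binom{j+1}{2}+1=c$, and the diagonal value $s+1>c$ is never hit. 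Second, your final paragraph's reading of the (T3) intervals via a set $A$ does not match the paper's $[e,r\omega_r]\cup[re_r,\omega]$, and in any case is not a consequence of Proposition~\ref{prop:coarse_is_schubert} alone (that proposition gives the Schubert/dual Schubert structure but not the specific endpoints); since the theorem only asks for ``BIPs, hence LPFMs'', you can simply drop that identification.
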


\begin{proof} The strategy is to show that in each case, the given hyperplane split is a decomposition of $\Pi_n$ into two LPFMs $F_1$, $F_2$. Thus, in view of \cite[Cor. 33]{BenKna} the flag polytopes $P_{F_1}$ and $P_{F_2}$ are Bruhat interval polytopes. We intend to show what are the corresponding Bruhat intervals.

(T1): Let $H:x_1+x_2+\cdots+x_j=1+2+\cdots+(j-1)+(j+1)$ where $j\leq n-2$ is fixed and let $\sigma=b_1b_2\cdots b_n\in S_n$ in one-line notation. Then
    \[\begin{cases}
    \sigma\in H^{>} & \text{ if } b_1+\cdots+b_j>{{j+1}\choose 2}+1,\\
    \sigma\in H^{<} & \text{ if } b_1+\cdots+b_j={{j+1}\choose 2},\\
    \sigma\in H & \text{ if } b_1+\cdots+b_j={{j+1}\choose 2}+1
\end{cases}\Longleftrightarrow
\begin{cases}
    \sigma\in H^{>} & \text{ if } b_1+\cdots+b_j>{{j+1}\choose 2}+1,\\
    \sigma\in H^{<} & \text{ if } \{b_1,\ldots,b_j\}=[j],\\
    \sigma\in H & \text{ if }  \{b_1,\ldots,b_j\}=[j-1]\cup\{j+1\}.
\end{cases}\]  
Now let $A=\{1,\dots,j-1,j+1\}$ and notice that notice that $H\cap \Pi_n=[{\overrightarrow A}e_A\, , {\overleftarrow A}\omega_A]$. If $\sigma\in H^{<}$ then $\sigma\lessdot(j,j+1)\sigma\leq\overleftarrow{A}\omega_A$ and thus $\Pi_n\cap H^{\leq}=[e,{\overleftarrow A}\omega_A]$. 

Let us show now that $\Pi_n\cap H^{\geq}=[{\overrightarrow A}e_A,\omega]$. If $\sigma\in H^{\geq n}$ set $X_{\sigma}:=\{b_1,\ldots,b_j\}\neq[j]$, set $Y_{\sigma}:=[n]\setminus X$ and notice that the permutation $\tau$ obtained by concatenating $\overrightarrow X_{\sigma}$ and $\overrightarrow Y_{\sigma}$, that is, $\tau=\overrightarrow{X_{\sigma}}\cdot\overrightarrow{Y_{\sigma}}$ satisfies that $\tau\leq\sigma$. Now, we will construct a chain in the Bruhat order that will lead us to conclude that $\overrightarrow A\cdot e_A\leq\overrightarrow {X_{\sigma}}\cdot \overrightarrow {Y_{\sigma}}\leq \sigma$ and the result will follow. Notice that $A=(A\cap X_{\sigma})\sqcup(A\cap Y_{\sigma})$ and $A\cap X_{\sigma}$ appears in the first $k$ positions of $\overrightarrow {X_{\sigma}}$, for some $k\leq j$. Similarly, $A\cap Y_{\sigma}$ appears in the first $l$ positions of $\overrightarrow{Y_{\sigma}}$, for some $l$. Then in $\tau$ the set $A\cup \{j\}$ appears in positions $[k]\cup\{j+1,\ldots,j+l\}$. 

Thus $\tau_1:=\tau(k+1,j+1)$ satisfies $\tau_1\lessdot\tau$ and $|A\cap X_{\tau_1}|=1+|A\cap X_{\tau}|$. Then it follows that the permutation $\tau_1'$ obtained from $\tau$ after ordering increasingly the values in entries $\{j+1,\ldots,n\}$ is such that $\tau_1'\leq\tau_1$. Similarly, the permutation $\tau_2:=\tau_1'(k+2,j+1)$ satisfies $\tau_2\lessdot\tau_1'$ and $|A\cap X_{\tau_2}|=1+|A\cap X_{\tau_1'}|$. In this fashion we arrive to a sequence of permutations $(\tau_{j-k},\tau_{j-k-1}',\tau_{j-k-2},\tau_{j-k-2}',\ldots,\tau_1,\tau)$ where $X_{\tau_{j-k}}=A$ and  
$$\overrightarrow Ae_A\leq \tau_{j-k}\lessdot\tau_{j-k-1}'\leq\tau_{j-k-1}\lessdot\tau_{j-k-2}\leq\cdots\tau_2\lessdot\tau_1'\leq\tau_1\lessdot\tau\leq \sigma.$$

It follows that the $H$-hyperplane split of $\Pi_n$ gives rise to the subdivision $[e,{\overleftarrow A}\omega_A]\cup[{\overrightarrow A}e_A,\omega]$, where each interval is a flag of Schubert and dual Schubert, respectively.

(T2): In (T1) we obtained the split subdivision $[e,{\overleftarrow A}\omega_A]\cup[{\overrightarrow A}e_A,\omega$]. Using Definition~\ref{def:dual_perm} the reader can verify thta  
$[e,{\overleftarrow A}\omega_A]^*=[{\overrightarrow B}e_B,\omega]$ and $[{\overrightarrow A}e_A,\omega]^*=[e,{\overleftarrow B}\omega_B]$ where $B=\{n-j, n-j+2,n-j+3,\dots,n-1,n\}$. Since this subdivision is also into LPFMs, in view of Lemma~\ref{lem: dual_split}, and moreover it is given by the hyperplane in (T2), the result follows.

(T3): Fix $r\in\{2,\dots,n-1\}$ and consider the hyperplane $H:x_1=r$. A permutation $\sigma=a_1a_2\cdots a_n\in S_n$ is such that 
    $$\begin{cases}
    \sigma\in H^{\geq} & \text{ if } a_1\geq r,\\
    \sigma\in H^{\leq} & \text{ if } a_2\leq r
\end{cases}$$
 and hence the $H$-split subdivision of $\Pi_n$ gives rise to the intervals $[e,r{\omega_r}]\cup[r{e_r}, \omega]$. This follows since for any $k\leq r$ it holds that: (i) $\tau\leq k{\omega}_k$ for any $\tau\in S_n$ such that $\tau(1)=k$; (ii) $(k,r)(kn(n-1)\cdots (k+1)(k-1)\cdots 1)=rn(n-1)\cdots (r+1)(r-1)\cdots 1$. That is, $(k,r)\cdot k{\omega}_k=r{\omega}_r\Longleftrightarrow k{\omega}_k\leq r{\omega}_r$ and thus $H^{\leq}\cap \Pi_n =[e,r{\omega}_r]$. In a similar fashion $H^{\geq}\cap \Pi_n =[r {e}_r,\omega]$.
 
 Now, the Bruhat interval $[e,r {\omega}_r]$ is the (1-skeleton of the) polytope flag whose constituents are Schubert matroids and thus, by Lemma \ref{lemma:schub_is_quo}, the given chain is a flag of LPMs. On the other hand, the interval $[r {e}_r,\omega]$ corresponds to a flag of dual Schubert LPMs. 

The case for $H:x_n=r$ has a similar analysis. In this case the $H$-split subdivision of $\Pi_n$ is the subdivision into the intervals $[e,{\omega}_rr]\cup[{e}_rr, \omega]$.  This allows us to conclude the result
\end{proof}

The following Lemma whose proof follows directly from the definitions and the proof of Theorem \ref{thm:coarsest_v1} is in order.
\begin{lemma}\label{lem: dual_split}
One has that 
\[\{H^*:H\text{ is a hyperplane of type }(T1)\}=\{G:G\text{ is a hyperplane of type }(T2)\}.\]
Additionally, a hyperplane $x_1=r$ of type (T3) is dual to $x_1=n-r+1$ of the same type.
\end{lemma}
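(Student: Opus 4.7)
The plan is to unpack the dualization operation $\sigma \mapsto \sigma^*$ of Definition~\ref{def:dual_perm} as a concrete affine involution on $\mathbb{R}^n$, and then simply compute how it acts on the affine equations defining (T1), (T2), (T3).

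First, I would observe that $\sigma^*(j) = n - \sigma(j) + 1$ means that the dualization extends to the involution $\iota\colon\mathbb{R}^n \to \mathbb{R}^n$ given by $\iota(x) = (n+1)\mathbf{1} - x$, where $\mathbf{1} = (1,\dots,1)$. This involution maps $\Pi_n$ to itself, sends vertex $\sigma$ to vertex $\sigma^*$, and swaps $e$ with $\omega$. Combined with Remark~\ref{rem:coarse_isLPM} and the discussion immediately before the lemma (namely that a coarse split $\Pi_n = [e,v] \cup [u,\omega]$ has dual $\Pi_n = [e,u^*] \cup [v^*,\omega]$), it follows that the split hyperplane $H^*$ is precisely the image $\iota(H)$. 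Under $\iota$, any hyperplane of the form $\sum_{i \in S} x_i = c$ is sent to $\sum_{i \in S} x_i = |S|(n+1) - c$, since $\iota$ only changes constants.

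Second, I would apply this to the (T1)--(T2) correspondence. A (T1) hyperplane has $S = \{1,\dots,j\}$ and $c = \binom{j+1}{2}+1$, so $\iota$ sends it to the hyperplane
\[
x_1 + \cdots + x_j \;=\; j(n+1) - \tbinom{j+1}{2} - 1.
\]
A short arithmetic identity shows
\[
j(n+1) - \tbinom{j+1}{2} - 1 \;=\; n + (n-1) + \cdots + (n-j+2) + (n-j),
\]
which is exactly the constant in (T2) for the same $j$. As $j$ ranges over $\{1,\dots,n-2\}$ this gives a bijection between the (T1) and (T2) families, establishing the first assertion.

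Finally, for (T3), the hyperplane $x_i = r$ with $i \in \{1,n\}$ is fixed-type under $\iota$: its image is $x_i = n+1 - r$, and $n+1-r$ still lies in $\{2,\dots,n-1\}$ exactly when $r$ does. This yields the claimed involution on (T3). The only non-formal step is the arithmetic identity in the middle paragraph; this is the sole place where one must be careful with indices, but it is a routine check and presents no conceptual obstacle, since all the structural work has already been done in the proof of Theorem~\ref{thm:coarsest_v1} and in the definition of the dual split.
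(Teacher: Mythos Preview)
Your proof is correct and is essentially what the paper intends: the paper gives no detailed argument beyond ``follows directly from the definitions and the proof of Theorem~\ref{thm:coarsest_v1}'', and your explicit realization of the dualization as the affine involution $\iota(x)=(n+1)\mathbf{1}-x$ is exactly the clean way to unpack that. The arithmetic identity $j(n+1)-\binom{j+1}{2}-1 = n+(n-1)+\cdots+(n-j+2)+(n-j)$ checks out, and your observation that $H^*=\iota(H)$ follows since $\iota$ carries the BIP $P_{[e,v]}$ onto $P_{[v^*,\omega]}$ (the map $\sigma\mapsto\omega\sigma$ being a Bruhat anti-automorphism), so the common facet is carried to the common facet.
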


In Example \ref{ex:poset_L4} we illustrate Theorem \ref{thm:coarsest_v1} and Lemma \ref{lem: dual_split}.


In Theorem \ref{thm:coarsest_v1} we proved that  (T1), (T2) and (T3) give rise to $H$-splits of $\Pi_n$ into pieces that are LPFMs. The next main result tells us that any $H$-split of $\Pi_n$ into BIPs is of either of those 3 types, hence we characterize them all. 

\begin{theorem}\label{thm: all_split_hyperplanes}
The only hyperplane splits of $\Pi_n$ that give rise to a split into BIPs are the ones described in conditions (T1), (T2), (T3). 
\end{theorem}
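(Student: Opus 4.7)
The plan is to deduce the structure of a valid split hyperplane $H$ in stages, constraining first its direction $T$ and then its offset $c$.

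First, since the common face $F_1\cap F_2$ is a codimension-one face of each cell, its affine span $H$ must be parallel to a facet of $\Pi_n$; from the description \eqref{eq:Pi_n} of these facets, $H:\sum_{i\in T}x_i=c$ for some $\emptyset\ne T\subsetneq[n]$ and $c\in\RR$. Edges of $\Pi_n$ correspond to transpositions of consecutive values, so along any edge the quantity $\sum_T x$ changes by $-1$, $0$, or $+1$; a non-integer $c$ lying strictly between the two integer endpoint values of an edge with change $\pm 1$ would cut the edge in its interior and produce a new vertex, violating Definition~\ref{def:hyper_split}. Hence $c\in\ZZ$.

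Next, invoking Proposition~\ref{prop:coarse_is_schubert} (after complementing $T$ if necessary to put the identity $e$ on the side $\sum_T\sigma\leq c$), we have $F_1=[e,v]$, so that $\{\sigma\in\S_n:\sigma\leq v\}=\{\sigma\in\S_n:\sum_T\sigma\leq c\}$. In particular the right-hand side is a Bruhat order ideal, which forces $\sigma\mapsto\sum_T\sigma$ to be monotone non-decreasing along every Bruhat cover. A direct computation shows that a cover $\sigma\lessdot\tau$ swapping positions $p<q$ changes $\sum_T$ by $(\sigma(q)-\sigma(p))(\chi_T(p)-\chi_T(q))$. Since every pair $p<q$ can be realized as the swap positions of some cover (choose $\sigma$ with adjacent values at these positions, which automatically satisfies the condition for a Bruhat cover), requiring the expression to be $\geq 0$ forces $\chi_T(p)\geq\chi_T(q)$ whenever $p<q$. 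That is, $T$ is downward-closed in $[n]$, i.e., $T=[j]$ for some $j\in\{1,\dots,n-1\}$ (the final-segment case being recovered by the complementation we allowed).

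Third, with $T=[j]$ the ideal $\{\sum_{[j]}\sigma\leq c\}$ decomposes as a disjoint union of parabolic cosets $\{\sigma:\sigma([j])=S\}$ indexed by $j$-subsets $S\subseteq[n]$ with $\sum S\leq c$, and the Bruhat maximum of each coset is determined by $S$ via the Gale order. Hence existence of a \emph{unique} principal max $v$ is equivalent to uniqueness of a Gale-maximal $S$, and for $c$ strictly below the maximum sum this reduces to the combinatorial condition: there is a unique $j$-subset of $[n]$ of sum exactly $c$. For $j=1$ this holds for every $c$ (singletons have distinct sums), giving (T3) with $T\in\{\{1\},\{n\}\}$ (the second case by complementation). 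For $2\leq j\leq n-2$, uniqueness at interior integer values holds only at $c=\binom{j+1}{2}+1$, where $S=[j-1]\cup\{j+1\}$ is unique (this is (T1)), and dually at $c=\binom{n+1}{2}-\binom{n-j+1}{2}-1$, giving (T2); this is shown by exhibiting, for intermediate $c=\binom{j+1}{2}+k$ with $k\geq 2$ and $j+k\leq n$, the Gale-incomparable pair $[j-1]\cup\{j+k\}$ and $[j-2]\cup\{j,j+k-1\}$, both of sum $c$, and dually at the upper end.

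The main obstacle I expect is the uniform verification that for every interior $c$ outside the four endpoint values one can exhibit at least two distinct $j$-subsets of sum $c$, covering both the ``lower'' and ``upper'' halves of the sum range and also the genuinely middle values (where neither the low pair nor its dual applies directly, but more general pairs like $[j-2]\cup\{a,b\}$ and $[j-3]\cup\{j-1,a+1,b\}$ or their duals supply the required incomparable subsets). Once this combinatorial bookkeeping is handled, the classification into exactly (T1), (T2), and (T3) is complete.
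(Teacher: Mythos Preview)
Your approach is genuinely different from the paper's. The paper argues directly on the level of $2$-dimensional faces of $\Pi_n$: it invokes the criterion (\cite[Theorem 4.3]{JLLO}, \cite[Theorem A]{BEW24}) that a split is into BIPs iff no square is cut and every hexagon is cut with its min and max on opposite sides, and then for every candidate hyperplane $\sum_{i\in S}x_i=\alpha$ not of type (T1)--(T3) exhibits an explicit square or hexagon that is split badly. Your route instead leverages Proposition~\ref{prop:coarse_is_schubert} to force $\{\sigma:\sum_T\sigma\le c\}$ to be a principal Bruhat ideal, reduces the shape of $T$ and then the value of $c$ via Gale-order considerations, and finally lands on a purely combinatorial question about $j$-subsets of $[n]$ with prescribed sum. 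The payoff of the paper's method is that it never needs the subset-sum lemma and stays close to the polytope; the payoff of yours is a cleaner structural explanation of \emph{why} the three families appear, linking them to uniqueness of partitions into distinct parts.

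That said, there are two genuine gaps. First, the implication ``$\{\sigma:\sum_T\sigma\le c\}$ is a Bruhat ideal, hence $\sum_T$ is monotone along \emph{every} cover'' is false as stated: a single level set being an order ideal constrains only covers that straddle $c$, not all covers. What you actually need is: if $T$ is not an initial segment, pick $p<q$ with $p\notin T$, $q\in T$, and exhibit a cover $\sigma\lessdot\tau$ swapping positions $p,q$ with $\sum_T\sigma=c+1$ and $\sum_T\tau=c$. This can be done (put consecutive values $a,a+1$ at $p,q$ and choose the remaining values to hit the target sum; the relevant range is $[\binom{j+1}{2}+1,\,M]$, which contains $c+1$), but it is not what you wrote.

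Second, the ``main obstacle'' you flag is a real one and is not discharged by your two displayed families and their duals: for $j\ge 3$ the ranges $2\le k\le n-j$ and (by reflection) $k\ge (j-1)(n-j)$ do not cover the middle of $[\,2,\,j(n-j)-2\,]$. The cleanest fix is to invoke the unimodality of the Gaussian binomial $\binom{n}{j}_q$: the number of $j$-subsets of $[n]$ with sum $\binom{j+1}{2}+k$ is the coefficient of $q^k$, which is $1$ at $k=0,1$, is $\ge 2$ at $k=2$ (for $2\le j\le n-2$), and by unimodality stays $\ge 2$ until $k=j(n-j)-2$. Without this (or a completed explicit construction for all middle values of $k$), the argument is incomplete.
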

\begin{proof}
The idea of the proof will be to show that hyperplanes described in Theorem \ref{thm:coarsest_v1} are the only parallel translations of facet-defining hyperplanes on $\Pi_n$ that do not split $2$-dimensional faces (squares and hexagons) in a bad way as described in Definition~\ref{def:good_bad_split}.  
To this end, let $S=\{i_1<i_2<\cdots<i_j\}$ and consider $\alpha=\alpha_1+\cdots+\alpha_j$ such that ${{j+1}\choose 2}<\alpha<n+(n-1)+\cdots+(n-j+1)$.
    Let $|S|>1$ and let $H:x_{i_1}+x_{i_2}+\cdots+x_{i_j}=\alpha_1+\alpha_2+\cdots+\alpha_j$.

    CASE I: Suppose there exist indices $1<a<b<n$ such that 
    $$\alpha_{a-1}<\alpha_a-1<\alpha_a<\alpha_b<\alpha_b+1\leq\alpha_{b+1}$$ then $s_{\alpha_a-1}$ and $s_{\alpha_b}$ commute.
    \begin{enumerate}
        \item[Ia] Assume there exists $\ell,k$ such that $1\leq\ell\leq a$ and $b\leq k\leq j$ with $i_{\ell-1}+1<i_{\ell}$, $i_k+1<i_{k+1}$. This condition is about having enough room to build a permutation $\sigma$ as follows:
        $\sigma(i_r)=\alpha_r$ for each $r\in[j]$, $\sigma(i_{\ell}-1)=\alpha_a-1$, $\sigma(i_k+1)=\alpha_b+1$. The remaining positions of $\sigma$ are filled with the unused entries. Then $s_{\alpha_a-1}s_{\alpha_b}\sigma=s_{\alpha_b}s_{\alpha_a-1}\sigma\in H$ although $s_{\alpha_a-1}\sigma\in H^{<}$, $s_{\alpha_b}\sigma\in H^{>}$.
        \item[Ib] If only one of $k,\ell$ as above exists, say only $k$, then take $\sigma$ such that $\sigma(i_r)=\alpha_r$ and place $\alpha_a-1,\alpha_b+1$, in that order, to the right of $\alpha_b$. Fill the remaining positions with the remaining values. Then $\sigma$ and $s_{\alpha_a-1}s_{\alpha_b}\sigma\in H$ but  $s_{\alpha_a-1}\sigma\in H^{<}$, $s_{\alpha_b}\sigma\in H^{>}$
        \item[Ic] Neither $k$ nor $\ell$ exists. Take $\sigma$ such that $\sigma(i_r)=\alpha_r$ for $r\neq a, r\neq b$, $\sigma(i_a)=\alpha_b, \sigma(i_b)=\alpha_a$. In between $\alpha_b, \alpha_a$ place, in that order, $\alpha_a-1, \alpha_b+1$. The remaining values are filled in the remaining positions. Then $\sigma,s_{\alpha_a-1}s_{\alpha_b}\sigma\in H$ but $s_{\alpha_a-1}\sigma\in H^<, s_{\alpha_b}\sigma\in H^{>}$.
    \end{enumerate}
    Notice that the condition given in CASE I guarantees that the value $\alpha$ can be written in more than one way under the stated conditions. Now it remains to consider the case in which $\alpha$ can only be written in one way. 
    
    CASE II: Suppose $\alpha_1+\alpha_2+\cdots+\alpha_j=1+2+\cdots+(j-1)+(j+1)$ or $n+(n-1)+\cdots+(n-j+2)+(n-j)$. It is enough to analyze the former situation. 
    \begin{enumerate}
        \item[IIa] Let $\alpha_r=r$ for $r\leq j-1$, $\alpha_j=j+1$ and consider 
        $\sigma(i_r)=\alpha_r$ for $r\in[j]$. Suppose there are positions $k<i_{j-1}$ and $\ell>i_j$ and place $j,j+2$, respectively. In this way, the permutation $\sigma$ has values $j, j+1, j+2$ in that order. Applying the braid relation $s_js_{j+1}s_j$ to sigma yields $s_js_{j+1}s_j\sigma\in H$. On the other hand, $s_j\sigma$ and $s_{j+1}s_j\sigma$ are in $H^<$. Similarly, $s_{j+1}\sigma$ and $s_js_{j+1}\sigma$ are in $H^>$.
        \item[IIb] If only one of $k,\ell$ is not available, say $k$ then the hyperplane in this case is of the form $x_1+x_2+\cdots+x_{j-1}+x_{j'}=\alpha$ for some $j'\geq j$. If $j'<j$ let $\sigma(j)=j$ and place $j+2$ to the right of $j$. Applying again $s_js_{j+1}s_j$ to $\sigma$ leads to the analysis above. If $j'=j$ then this is a good split: $[e,\overleftarrow{S}\!\omega_S]\cup[\overrightarrow{S}e_S,\omega]$.  
    \end{enumerate}

    Finally, if $|S|=1$ the hyperplane $H:x_i=\alpha$, where $2\leq\alpha\leq n-1$ is such that taking $\sigma\in S_n$ whose values in positions $i-1,i,i+1$ are $\alpha-1,\alpha, \alpha+1$ in that order, satisfies that $H$ gives a bad split of the interval $[\sigma,s_{\alpha}s_{\alpha-1}s_{\alpha}\sigma]$. Notice that to construct such $\sigma$, the only constrain is for $i\neq 1, i\neq n$, in accordance with (T3) in Theorem~\ref{thm:coarsest_v1}. The reader is encourage to look back again at Figure~\ref{fig:goodbad} to determine which case do they belong to that makes them bad.
    \end{proof}

    \begin{remark}\label{rem:coarsest_subdiv}
In~\cite[Table 2]{BEW24} the authors display all the 9 coarsest nontrivial subdivisions of $\Pi_4$ into BIPS. Three of them are subdivisions into four BIPS, whereas the ones obtained by us are into two BIPs. We will go back to this comment in our last Section.
    \end{remark}

Part of the motivation for this paper was to understand the finest subdivisions of $\Pi_n$ into BIPs coming from LPFMs. That is, pieces that correspond to polytopes of full flag lattice path matroids. Since Theorems~\ref{thm:coarsest_v1} and ~\ref{thm: all_split_hyperplanes} provide an answer to this for the coarsest ones, rather than the finest ones, we now turn our attention to refinements of these subdivisions.

\begin{definition}\label{def:poset_refinement}
    Denote by $\mathfrak L_n$ the poset whose elements are the polytopal subdivisions of $\Pi_n$ obtained by simultaneously splitting it using finitely many hyperplanes of types (T1), (T2) or (T3), and ordered by refinement. If $\Sigma\in\mathfrak L_n$ is obtained via the hyperplanes $H_1,\dots,H_m$ we write $\Sigma=H_1\wedge\cdots\wedge H_m$.
\end{definition}

\begin{example}\label{ex:poset_L4}
    Let us illustrate all the non-trivial subdivisions of $\Pi_4$ into BIPs coming from LPFMs, as well as the elements of the poset $\mathfrak L_4$.
    Using Theorems \ref{thm:coarsest_v1} and \ref{thm: all_split_hyperplanes} we obtain a complete list of the six hyperplane splits of $Perm_4$:
    \begin{enumerate}
        \item[(T1)] $H_1: x_1+x_2=4$
        \item[(T2)] $H_2: x_1+x_2=6$
        \item[(T3)] $H_3: x_1=2$\quad $H_4:x_1=3$\quad $H_5:x_4=2$\quad $H_6:x_4=3$.
    \end{enumerate}\label{ex:splits_p4}
 Now, Lemma \ref{lem: dual_split} tells us that $H_1^*=H_2$ and $H_3^*=H_4, H_5^*=H_6$.
 In Figure \ref{fig:poset_4_subdiv} we illustrate the poset $\mathfrak L_4$. There we write $x_{12}$ meaning $x_1+x_2$. For instance, the common refinement $H_2\wedge H_3\wedge H_6$ provides the subdivision in the top-right into the intervals
 $$
 [e,2413]\cup[1243,2431]\cup[2134,4213]\cup[2143,4231]\cup[2413,\omega].
 $$
 
  \begin{figure}[ht]
    \centering
    \includegraphics[width=1\linewidth]{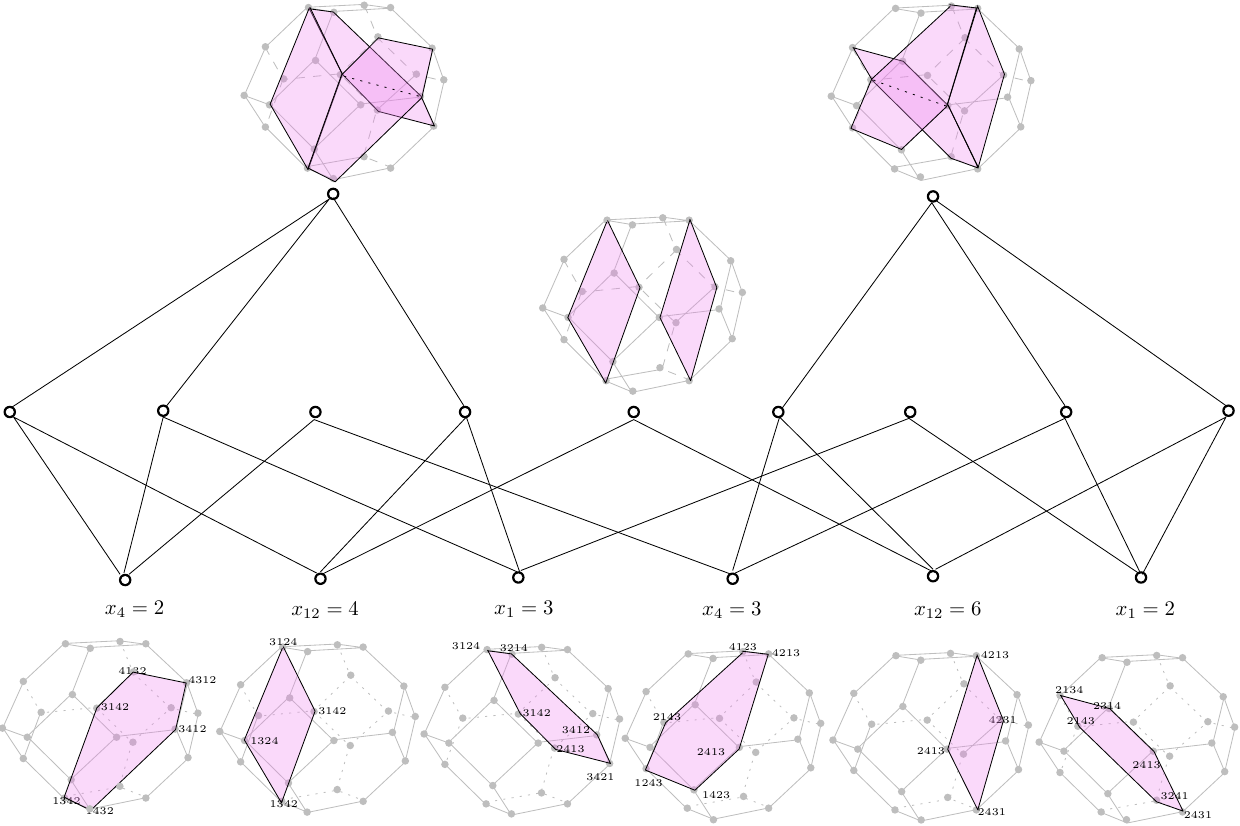}
    \caption{Poset $\mathfrak L_4$ of split subdivisions of $\Pi_4$ by refinement.}
    \label{fig:poset_4_subdiv}
\end{figure} 
In \cite[Table 1]{BEW24} the authors lists all the 14 finest (regular) subdivisions of $\Pi_4$ into Bruhat Interval Polytopes. Out of those 14 finest subdivisions, the bottom two in that Table are the ones we obtain here as the top elements of $\mathfrak L_4$ in Figure \ref{fig:poset_4_subdiv}. On the other hand, the six coarsest subdivisions mentioned in Remark~\ref{rem:coarse_isLPM} that appear in
\cite[Table 2]{BEW24} are the minimal elements of the poset $\mathfrak L_4$.
\end{example}

In the next section we will pose some questions and conjectures regarding the analysis made so far, along with the connection to tropical geometry.

\section{Further questions}\label{sec:problems}

In Example~\ref{ex:poset_L4} we manage to display the whole poset $\mathfrak L_4$, thanks to the understanding of the coarsests subdivisions of $\Pi_4$ into LPFMs, and using SAGE. However, we do not know in general what are all the elements of $\mathfrak L_n$. Thus we start by posing the following. 

\begin{problem}
    Describe all the subdivisions of $\Pi_n$ into BIPs coming from LPFMs. That is, describe the poset $\mathfrak L_n$. In particular, describe the finest of such subdivisions.
\end{problem}

This problem requires one to understand the issues that may arise when subdividing $\Pi_n$ with different hyperplanes of types (T1), (T2), (T3). One possible issue is for the refinement to create new vertices. For instance, using the notation in Example \ref{ex:poset_L4} one has that $H_2\wedge H_5$ is not an element of $\mathfrak L_4$ as it creates new vertices (see Figure~\ref{fig:H2H5bad}).

  \begin{figure}[ht]
    \centering
    \includegraphics[width=0.5\linewidth]{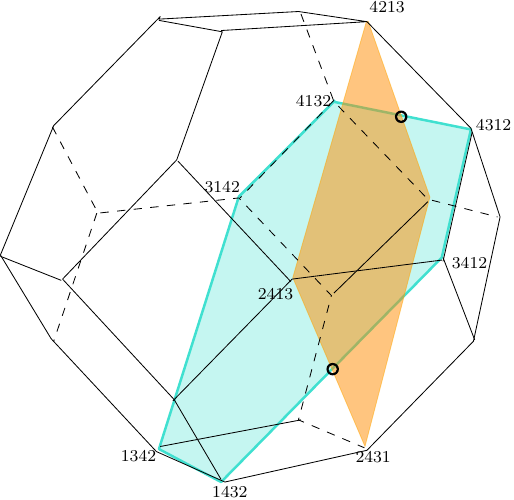}
    \caption{A refinement of two minimal elements of $\mathcal L_4$ that is not in $\mathcal L_4$.}
    \label{fig:H2H5bad}
\end{figure}

Nonetheless, an observation regarding this problem is that some obvious common refinements come from hyperplanes with the same normals and different level sets. For instance, in $\mathfrak L_5$ a subdivision comes from the common refinement of the hyperplanes $x_1=2, x_1=3,x_1=4$. On the other hand, notice from Figure~\ref{fig:poset_4_subdiv} that $H_3\wedge H_6\in\mathfrak L_4$ although those two hyperplanes intersect in their interior (inside $\Pi_4$). Thus not every element in $\mathfrak L_n$ come from non-crossing hyperplanes. 

Changing gears a little bit, another motivation for wanting to understand the subdivisions we obtained in this paper comes from tropical geometry. Given a polytope $P\subset\RR^n$ with vertices $v_1,\dots,v_m$ and a subdivision $\Delta$ of $P$ we say that $\Delta$ is \emph{regular (coherent)} if there exist a tuple of reals $(\mu_1,\dots,\mu_m)$ such that $\Delta$ is obtained by projecting the lower facets of $\text{conv}\{(v_1,\mu_1),\dots,(v_m,\mu_m) \}$ back to $\RR^n$. 

Subdivisions of $\Pi_n$ into BIPs can be parametrized by cones in the \emph{positive tropical complete flag variety} $Tr^{>0}\F l_n$ (see~\cite{Bor22,JLLO,BEW24}). We will not delve into the exciting area of tropical geometry here, but we will content ourselves with stating the following.

\begin{lemma}\label{lem:lpfms_tropical}
    Each of the subdivions of $\Pi_n$ given by hyperplanes of types (T1), (T2), (T3) is regular. 
\end{lemma}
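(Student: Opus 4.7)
The plan is to exhibit, for each hyperplane of type (T1), (T2), or (T3), an explicit height function that realizes the corresponding split as a regular subdivision. Let $H = \{x \in \RR^n : \ell(x) = 0\}$ be such a hyperplane, where $\ell$ is a fixed affine functional, and let $F_1 := \Pi_n \cap H^{\geq}$ and $F_2 := \Pi_n \cap H^{\leq}$ be the two maximal cells of the split. For each vertex $v$ of $\Pi_n$ I will assign the height
\[
\mu(v) := \max\{0,\, -\ell(v)\},
\]
so that $\mu(v) = 0$ for $v \in F_1$ and $\mu(v) = -\ell(v)$ for $v \in F_2$ (the two formulas agree on $F_1 \cap F_2 \subset H$, where $\ell$ vanishes).

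The main step will be to identify the lower envelope of the lifted polytope $Q := \mathrm{conv}\{(v,\mu(v)) : v \text{ vertex of } \Pi_n\} \subset \RR^{n+1}$. I claim this envelope, viewed as a function on $\Pi_n$, is precisely $x \mapsto \max\{0,-\ell(x)\}$. For the lower bound, I will apply Jensen's inequality to the convex function $t \mapsto \max\{0,-t\}$: any convex combination $x = \sum_i \lambda_i v_i$ satisfies
\[
\sum_i \lambda_i \mu(v_i) \;\geq\; \max\bigl\{0,\,-\sum_i \lambda_i \ell(v_i)\bigr\} \;=\; \max\{0,-\ell(x)\}.
\]
For the matching upper bound, I will invoke Theorem~\ref{thm:coarsest_v1}: since the split introduces no new vertices, every $x \in F_j$ can be written as a convex combination of vertices of $\Pi_n$ lying in $F_j$; on each $F_j$ the height $\mu$ agrees with an affine function of $v$, so such a combination exactly achieves $\max\{0,-\ell(x)\}$.

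From this computation it will follow that the lower envelope is piecewise-linear with exactly two domains of linearity, $F_1$ and $F_2$, so projecting the lower facets of $Q$ back to $\RR^n$ recovers the split subdivision $\{F_1,F_2\}$, proving regularity. No deep obstacle arises here; the argument is really a general fact about hyperplane splits of polytopes, and the special role of Theorem~\ref{thm:coarsest_v1} is only to guarantee that the splits in question are bona fide subdivisions with no extra vertices. For common refinements $\Sigma = H_1 \wedge \cdots \wedge H_k \in \mathfrak L_n$, the same recipe extends with $\mu(v) := \sum_j \max\{0,-\ell_j(v)\}$: convexity and piecewise-linearity are preserved by summation, and the domains of linearity of the resulting convex piecewise-linear function are precisely the cells of $\Sigma$.
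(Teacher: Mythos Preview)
Your argument is correct and essentially unpacks the one-line proof in the paper. The paper simply notes that, by Theorem~\ref{thm:coarsest_v1}, each of these hyperplanes yields a split of $\Pi_n$ (two maximal cells, no new vertices), and then implicitly invokes the well-known fact that every split of a polytope is a regular subdivision (cf.\ the reference \cite{JosSchro} already cited for Definition~\ref{def:hyper_split}). You instead give the standard explicit height function $\mu(v)=\max\{0,-\ell(v)\}$ and verify directly, via convexity and the no-new-vertices property from Theorem~\ref{thm:coarsest_v1}, that the lower envelope is the piecewise-linear function with domains of linearity $F_1,F_2$. This is exactly the usual proof that splits are regular, so the two approaches coincide in spirit; yours is self-contained and has the advantage of producing explicit weights, which is relevant to the subsequent problems in Section~\ref{sec:problems} about locating these subdivisions inside $Tr^{>0}\F l_n$. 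Your remark on common refinements $\Sigma\in\mathfrak L_n$ via $\mu=\sum_j\max\{0,-\ell_j\}$ is a nice bonus (and again a standard fact about compatible splits), though it goes beyond what the lemma asserts.
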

\begin{proof}
    This follows from the fact that conditions (T1), (T2), (T3) provide subdivisions of $\Pi_n$ into two BIPs.
\end{proof}

In~\cite{BEW24} the authors provide different fan structures of $Tr^{>0}\F l_n$. The one that gives rise to ~\cite[Table 1, Table 2]{BEW24} is a fan structure that, as they mention there, is dual to the 3-dimensional associahedron. Hence, the $f$-vector of this fan is $(14, 21, 9, 1)$. Column 1 of ~\cite[Table 1]{BEW24} provide 14 weights each coming from one of the maximal cones of the mentioned fan structure. 
Similarly, column 1 of ~\cite[Table 2]{BEW24} provide 9 weights each coming from one ray of the fan. Already, we accounted for 2 of those 14 and 6 of those 9 cones of dimensions 3 and 1, respectively, as there are two finest and 6 coarsest subdivisions of $\Pi_4$ into LPFMs. 

In Figure~\ref{fig:lpfm_in_fan} we relate the two finests subdivisions of $\Pi_4$ into LPFMs with the two maximal cones in $Tr^{>0}\F l_n$ that they correspond to. We draw it next to the 3-dimensional associahedron as explained here.

Hence the following questions are in order.

\begin{problem}
    Describe the elements of $\mathfrak L_n$ as regular subdivisions of $\Pi_n$, coming from points in $Tr^{>0}\F l_n$.
\end{problem}

\begin{problem}
    How does the subcone of LPFMs sit inside the cone of $Tr^{>0}\F l_n$, with respect to a given fan structure.?
\end{problem}

  \begin{figure}[ht]
    \centering
    \includegraphics[width=0.7\linewidth]{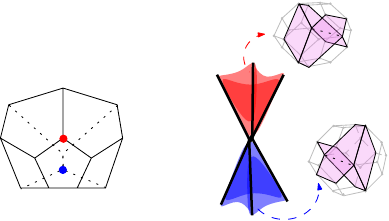}
    \caption{Finest subdivisiones of $\Pi_4$ as points in maximal cones of fan structure of $Tr^{>0}\F l_4$ dual to the associahedron.}
    \label{fig:lpfm_in_fan}
\end{figure}

Our next question has to do with the relation of subdivisions of $\Pi_n$ into LPFMs and matroid minors.
Namely, if $H:x_1=r$ is a hyperplane of type (T3) let $\M_1$ and $\M_2$ be the two LPFMs that the $H$-split of $\Pi_n$ give rise. It is an exercise to the reader to verify that deleting the element $n$ of each constituent of $\M_1$ and $\M_2$ give rise to two flags $\M_1'$ and $\M_2'$ which come from a type (T3) subdivision of $\Pi_{n-1}$ into LPFMs. Hence we pose the following question.

\begin{problem}
    Describe the subdivisions of $\Pi_n$ into LPFMs that can be constructed recursively via matroid operations.
\end{problem}

Another question we are interested in is the following

\begin{problem}
    Does every full dimensional LPFM appear as a cell in a subdivision of $\Pi_n$ into BIPs?
\end{problem}

Finally, we point out that in ~\cite{tewari_nadeau} (see also \cite{KnutsonSanchezSherman}) the authors study decompositions of $\Pi_n$ into cubes, which turn out to be BIPs. Notice that the finest subdivisions of $\Pi_4$ into LPFMs are such that not all of its maximal faces are cubes. However, we would like to understand which LPFMs are cubes.

\bibliographystyle{amsplain}
\bibliography{main.bib}

\end{document}